\numberwithin{equation}{section}
\numberwithin{figure}{section}
\providecommand{\corollaryname}{Corollary}
  \providecommand{\definitionname}{Definition}
  \providecommand{\examplename}{Example}
  \providecommand{\lemmaname}{Lemma}
  \providecommand{\propositionname}{Proposition}
\providecommand{\theoremname}{Theorem}
\providecommand{\remarkname}{Remark}
\theoremstyle{plain}
\newtheorem{thm}{\protect\theoremname}[section]
  \theoremstyle{definition}
  \newtheorem{defn}[thm]{\protect\definitionname}
  \theoremstyle{plain}
  \newtheorem{prop}[thm]{\protect\propositionname}
  \theoremstyle{plain}
  \newtheorem{cor}[thm]{\protect\corollaryname}
  \theoremstyle{plain}
  \newtheorem{lem}[thm]{\protect\lemmaname}
  \theoremstyle{definition}
  \newtheorem{example}[thm]{\protect\examplename}
  \theoremstyle{remark}
  \newtheorem{rem}[thm]{\protect\remarkname}  
\author[S. ter Horst]{Sanne ter Horst} \address{S. ter Horst, Department of Mathematics, Unit for BMI, North-West University, Potchefstroom, 2531, South Africa} \email{Sanne.TerHorst@nwu.ac.za}
\author[M. Messerschmidt]{Miek Messerschmidt} \address{M. Messerschmidt, Department of Mathematics, Unit for BMI, North-West University, Potchefstroom, 2531, South Africa} \email{mmesserschmidt@gmail.com}
\author[A.C.M. Ran]{Andr\'e C.M. Ran} \address{A.C.M. Ran, Department of Mathematics, FEW, VU university Amsterdam, De Boelelaan 1081a, 1081 HV Amsterdam, The Netherlands, and Unit for BMI, North-West~University, Potchefstroom,  2531, South Africa} \email{a.c.m.ran@vu.nl}
\subjclass[2010]{Primary: 47A05, 47B10 Secondary: 47L20, 46B03}
\keywords{Equivalence after extension, compact Banach space operators, $s$-numbers, operator ideals}
\begin{document}

\global\long\def\norm#1{\left\Vert #1\right\Vert }
{} \global\long\def\parenth#1{\left(\vphantom{#1}\right.\!\!#1\!\!\left.\vphantom{#1}\right)}
 \global\long\def\curly#1{\left\{  #1\right\}  }
 \global\long\def\set#1#2{\left\{  \vphantom{#1\ \vrule\ #2}\right.\!\!#1\ \vrule\ \linebreak[3]#2\!\!\left.\vphantom{#1\ \vrule\ #2}\right\}  }

 
\global\long\def\abs#1{\left\vert #1\right\vert }
 \global\long\def\span{\textup{span}}
 %
\global\long\def\R{\mathbb{R}}
{} \global\long\def\Z{\mathbb{Z}}
 \global\long\def\N{\mathbb{N}}
 \global\long\def\F{\mathbb{F}}
 \global\long\def\C{\mathbb{C}}
 \global\long\def\closedball#1{\textup{\textbf{B}}_{#1}}
 \global\long\def\operatorideal#1{I_{#1}}
 \global\long\def\rank{\textup{rank}}




\global\long\def\id{\textup{id}}



\global\long\def\range{\textup{ran}}




\global\long\def\smb#1{\left[\begin{smallmatrix}#1\end{smallmatrix}\right]}

\title[Equivalence after extension for compact operators]{Equivalence after extension for compact operators on Banach spaces}

\begin{abstract}
In recent years the coincidence of the operator relations equivalence
after extension and Schur coupling was settled for the Hilbert space
case, by showing that equivalence after extension implies equivalence
after one-sided extension. In the present paper we investigate consequences
of equivalence after extension for compact Banach space operators.
We show that generating the same operator ideal is necessary but not
sufficient for two compact operators to be equivalent after extension.
In analogy with the necessary and sufficient conditions for compact
Hilbert space operators to be equivalent after extension, in terms
of their singular values, we prove, under certain additional conditions,
the necessity of a similar relationship between the $s$-numbers of
two compact Banach space operators that are equivalent after extension,
for arbitrary $s$-functions.

We investigate equivalence after extension for operators on $\ell^{p}$-spaces.
We show that two operators that act on different $\ell^{p}$-spaces
cannot be equivalent after one-sided extension. Such operators can
still be equivalent after extension, for instance all invertible operators
are equivalent after extension, however, if one of the two operators
is compact, then they cannot be equivalent after extension. This contrasts
the Hilbert space case where equivalence after one-sided extension
and equivalence after extension are, in fact, identical relations.

Finally, for general Banach spaces $X$ and $Y$, we investigate consequences
of an operator on $X$ being equivalent after extension to a compact
operator on $Y$. We show that, in this case, a closed finite codimensional
subspace of $Y$ must embed into $X$, and that certain general Banach
space properties must transfer from $X$ to $Y$. We also show that
no operator on $X$ can be equivalent after extension to an operator
on $Y$, if $X$ and $Y$ are essentially incomparable Banach spaces. 
\end{abstract}

\maketitle

\section{Introduction}

Equivalence after extension (EAE) is an equivalence relation on bounded
Banach space operators that first appeared in the study of integral
equations \cite{BartGohbergKaashoek}; see Definition \ref{def:operatorrels}
below for its formal definition, as well as the definitions of the
other operator relations discussed in this paragraph. Part of the
advances made after the introduction of this notion came from the
observation that it coincided with another equivalence relation referred
to as matricial coupling (MC); in \cite{BartGohbergKaashoek} only
the implication (MC) $\Rightarrow$ (EAE) is used, and proved, while
the reverse implication (EAE) $\Rightarrow$ (MC) was settled in \cite{BartTsekanovskiMatricialCoupling}.
A few years later the operator relations again appeared, when in \cite{BartTsekanovski}
it was shown that a third operator relation, named Schur coupling
(SC), implies equivalence after extension and matricial coupling,
and the question was posed whether these three operator relations
coincide, i.e., if (EAE) $=$ (MC) $\Rightarrow$ (SC). Apart from
an affirmative answer in the case of Fredholm operators of index 0
(and without the index constraint in the case of Hilbert space operators),
little progress was made until recently. In \cite{TerHorstRan} the
three operator relations were shown to coincide for the classes of
Hilbert space operators with closed range and Banach space operators
that can be approximated in norm by an invertible operator, leading
to an affirmative answer in the case of Hilbert space operators on
separable Hilbert spaces. The general Hilbert space case was settled
by Timotin in \cite{Timotin} by showing that equivalence after extension
implies another operator relation, namely equivalence after one-sided
extension (EAOE), which was shown to imply Schur coupling in \cite{BartGohbergKaashoekRan}.
Specifically in the case of compact Hilbert space operators, a characterization
for two compact operators to be equivalent after extension is presented
by Timotin in \cite{Timotin} in terms of their singular values (cf.
Theorem \ref{thm:TimotinForCompactOperators} below).

In the current paper we focus on the notions of equivalence after
extension and equivalence after one-sided extension for compact Banach
space operators.

In the sequel the term `operator' will be short for bounded linear
operator and invertibility of an operator will imply that the inverse
is a (bounded) operator as well. All Banach spaces are assumed to
be over $\C$ and for given Banach spaces $X$ and $Y$ we write $B(X,Y)$
for the space of bounded linear operators from $X$ to $Y$, abbreviated
to $B(X)$ in case $X=Y$. By $X\oplus Y$ we denote the $\ell^{2}$-direct
sum of Banach spaces $X$ and $Y$. The identity operator on a Banach
space $X$ will be denoted by $\id_{X}$. With these definitions out
of the way, we are ready to formulate the operator relations discussed
in the first paragraph. 
\begin{defn}
\label{def:operatorrels} Let $T\in B(X)$ and $S\in B(Y)$ be Banach
space operators. 
\begin{enumerate}
\item We will say that $T$ and $S$ are \emph{equivalent after extension}
if there exist Banach spaces $X'$ and $Y'$ and invertible operators
invertible operators $E\in B(Y\oplus Y',X\oplus X')$ and $F\in B(X\oplus X',Y\oplus Y')$
such that 
\[
\left[\begin{array}{cc}
T & 0\\
0 & \id_{X'}
\end{array}\right]=E\left[\begin{array}{cc}
S & 0\\
0 & \id_{Y'}
\end{array}\right]F.
\]

\item We will say that $T$ and $S$ are \emph{equivalent after one-sided
extension} if $T$ and $S$ are equivalent after extension and one
of the Banach spaces $X'$ or $Y'$ can be chosen as the trivial Banach
space $\{0\}$. 
\item We will say that $T$ and $S$ are \emph{matricially coupled} if there
exists an invertible operator $U=\smb{U_{11}&U_{12}\\U_{21}&U_{22}}\in B(X \oplus Y, X\oplus Y)$
with inverse $V=\smb{V_{11}&V_{12}\\V_{21}&V_{22}}\in B(X \oplus Y, X\oplus Y)$,
so that $T=U_{11}$ and $S=V_{22}$. 
\item We will say that $T$ and $S$ are \emph{Schur coupled} if there exists
an operator $\smb{A&B\\C&D}\in B(X \oplus Y, X\oplus Y)$ such that
$A\in B(X)$ and $D\in B(Y)$ are invertible and $T=A-BD^{-1}C$ and
$S=D-CA^{-1}B.$
\end{enumerate}
\end{defn}
\begin{rem}
\label{rem:elem-eae-properties}If $T$ and $S$ are equivalent after
extension then the spaces $X'$ and $Y'$ in (1) above can always
be chosen so that $X'=Y$ and $Y'=X$ (cf. \cite[Lemma 4.1]{TerHorstRan}).
Throughout the rest of this paper this will be tacitly assumed. 

We also note, in the case that both $T$ and $S$ are invertible,
then an elementary construction will show that they are equivalent
after extension.
\end{rem}
Timotin's solution in \cite{Timotin}, that equivalence after extension
of Hilbert space operators implies their equivalence after one-sided
extension, relies first of all on the spectral theorem (after reducing
the general situation to that of positive operators without loss of
generality). This technique is, of course, not available for Banach
space operators without restricting to smaller classes of operators.
Secondly, Timotin's argument relies in an essential way on the identical
geometry that all Hilbert spaces share, in particular, that any Hilbert
space can be embedded into any other of greater dimension. This is
not possible for Banach spaces in general. As we shall see, the geometries
of the underlying spaces play a crucial role in the possibility of
two operators to be equivalent after (one-sided) extension. In fact,
Corollary \ref{cor:eae-does-not-imply-eaoe} will show that Timotin's
result, that equivalence after extension implies equivalence after
one-sided extension, does \emph{not} generalize to general Banach
spaces. 

Despite the lack of consistent geometrical structure across different
Banach spaces, some positive results are achievable. Compact Banach
space operators that are equivalent after extension are shown in Theorem
\ref{thm:EAE-Implies-Same-Ideal} to necessarily generate the same
ideal. This property is however not sufficient to imply equivalence
after extension as illustrated by Example~\ref{exa:generating-same-ideal-not-sufficient-for-eae}.

Using the general theory of $s$-numbers for Banach space operators
as replacement for Hilbert space operators' singular values, one is
still able to regain the necessity of certain relationships between
the $s$-numbers of compact Banach space operators that are equivalent
after extension or generate the same ideal, much akin to similar results
by Timotin and Schatten, cf. Theorem \ref{thm:TimotinForCompactOperators}
and Proposition \ref{lem:s-numbers-consequence-of-ran-lemma}, and
Theorem \ref{thm:SchattenIdealCharacterizationSpecialized} and Proposition
\ref{prop:Schatten-IT=00003D00003D00003DTS-imply-s-number-relation}
below. Example \ref{exa:n-1_on_lp_and_lq_not_eae} however shows that
Theorem \ref{thm:TimotinForCompactOperators} does not fully carry
over to Banach spaces in general.

For two Banach space operators with one of the operators compact,
them being equivalent after extension has far-reaching consequences
for the geometry of the underlying Banach spaces. An elementary application
of the Pitt-Rosenthal Theorem shows that the geometries of different
$\ell^{p}$-spaces are such that no compact operator on an $\ell^{p}$-space
can be equivalent after extension to any operator on a different $\ell^{p}$-space
(cf.~Proposition \ref{prop:no-compacts-on-different-lps-are-eae}).
We can go even further, by showing that no compact operator on a Banach
space $Y$ can be equivalent after extension to any operator on a
Banach space $X$, if the spaces $X$ and $Y$ are essentially incomparable
Banach spaces (cf.~Theorem \ref{thm:essentially-incomparable-no-operator-eae-to-a-compact}).

If a compact operator on a Banach space $Y$ is equivalent after extension
to any operator on a Banach space $X$, then a finite codimensional
subspace of $Y$ \emph{must} embed into $X$ (cf.~Theorem \ref{prop:Ran_finite_codim_subspace_embeds}).
The salient point of this result is that, for a Banach space operator
to be equivalent after extension to a compact Banach space operator,
the underlying Banach spaces' geometries \emph{must} be ``compatible
enough'' to allow for such an embedding. In fact, any Banach space
property that $X$ may have, that is also transferred to its closed
subspaces, and preserved under taking direct sums with finite dimensional
spaces, transfers from $X$ to $Y$ (cf.~Proposition \ref{prop:eae-to-compact-implies-transferrence-of-properties}).

\medskip{}
 We briefly describe the structure of the paper.

In Section \ref{sec:Operator-ideals-generated}, we will prove one
of our main results, Theorem \ref{thm:EAE-Implies-Same-Ideal}: That
compact Banach space operators that are equivalent after extension,
necessarily generate the same (operator) ideal. The proof relies on
Proposition \ref{prop:RanLemma}, which establishes what may be termed
a ``finite rank perturbed conjugation relationship'' that exists
between all compact operators on Banach spaces that are equivalent
after extension. In providing Example \ref{exa:generating-same-ideal-not-sufficient-for-eae},
we show that generating the same (operator) ideal is not sufficient
for compact Banach space operators to be equivalent after extension.

In Section \ref{sec:General-s-number-relationships-for-compact-eae}
we investigate $s$-number relationships for compact Banach space
operators. In Proposition \ref{prop:Schatten-IT=00003D00003D00003DTS-imply-s-number-relation},
we prove one direction of Schatten's characterization for Hilbert
space compact operators generating the same ideal in terms of their
singular values. Proposition \ref{lem:s-numbers-consequence-of-ran-lemma}
establishes the necessity of a relationship between the $s$-numbers
for compact Banach space operators that are equivalent after extension.
This result is analogous to Timotin's characterization in terms of
the singular values for compact Hilbert space operators that are equivalent
after extension (Theorem \ref{thm:TimotinForCompactOperators}). However,
Example \ref{exa:n-1_on_lp_and_lq_not_eae} will show that the full
characterization does not carry over to Banach spaces in general.

We investigate equivalence after extension for operators on $\ell^{p}$-spaces
in Section~\ref{sec:Equivalence-after-extension-for-lp}. The Pitt-Rosenthal
Theorem (Theorem \ref{thm:Pitt-Rosenthal-Theorem}) plays a crucial
role in our results. Employing this theorem, we show in Proposition
\ref{prop:no-lp-operators-are-eaoe} that no operators on different
$\ell^{p}$-spaces can ever be equivalent after one-sided extension.
This immediately establishes the existence of very simple operators
on $\ell^{p}$ that are equivalent after extension, but are not equivalent
after one-sided extension, cf. Corollary~\ref{cor:eae-does-not-imply-eaoe}.
This shows that Timotin's result, Theorem \ref{thm:Timotin-eae-equivalent-to-eaoe},
does not generalize to Banach spaces. We conclude the section by showing
in Proposition \ref{prop:no-compacts-on-different-lps-are-eae} that
no operator on an $\ell^{p}$-space can be equivalent after extension
to a compact operator on a different $\ell^{p}$-space.

Finally, in Section \ref{sec:Equivalence-after-extension-general},
we investigate some of the consequences of a Banach space operator
being equivalent after extension to a compact Banach space operator.
For Banach spaces $X$ and $Y$, we prove in Theorem \ref{thm:essentially-incomparable-no-operator-eae-to-a-compact},
that if $X$ and $Y$ are essentially incomparable (cf. Definition~\ref{def:incomparability}),
then no operator on $X$ can be equivalent after extension to a compact
operator on $Y$. On the other hand, if an operator on $X$ is equivalent
after extension to a compact operator on $Y$, Theorem \ref{prop:Ran_finite_codim_subspace_embeds}
shows that a closed finite codimensional subspace of $Y$ \emph{must
}embed into $X$. Also, Proposition \ref{prop:eae-to-compact-implies-transferrence-of-properties}
shows that any Banach space property that is transferred to closed
subspaces and preserved under the taking of direct sums with finite
dimensional spaces, transfers from $X$ to $Y$. Finally, Corollary
\ref{cor:properties-that-transfer} gives some specific examples of
such properties.

\medskip{}




\section{Operator ideals generated by compact operators and equivalence after
extension}

\label{sec:Operator-ideals-generated}

This section will establish that two compact operators on Banach spaces
that are equivalent after extension must necessarily generate the
same (operator) ideal. The converse implication is not true in general,
not even in the Hilbert space case, as illustrated in Example \ref{exa:generating-same-ideal-not-sufficient-for-eae}
below. 
\begin{defn}
Let $T\in B(X,Y)$ be a Banach space operator. For any Banach spaces
$Z_{1}$ and $Z_{2}$, we define 
\[
\operatorideal T(Z_{1},Z_{2}):=\bigcup_{n\in\N}\set{\sum_{j=1}^{n}R_{j}TR_{j}'}{R_{j}\in B(Y,Z_{2}),\ R_{j}'\in B(Z_{1},X)}.
\]
By $\operatorideal T$ we will denote the (proper) class $\bigcup_{Z_{1},Z_{2}}\operatorideal T(Z_{1},Z_{2})$,
and refer to $\operatorideal T$ as the \emph{operator ideal generated
by }$T$. 
\end{defn}
It is easy to see that $\operatorideal T$ is, in fact, an operator
ideal in the sense of Pietsch \cite[Chapter 1]{PietschOperatorIdeals},
provided $T\neq0$. In this case we note that $I_{T}$ also contains
all finite rank operators.

The following proposition will be a crucial ingredient in the current
and following section. It establishes what may be termed ``a finite
rank perturbed conjugation relationship'' that exists between compact
operators on Banach spaces that are equivalent after extension.

We note that the symmetry of equivalence after extension allows us
to exchange the roles of $T$ and $S$ in the following result without
any loss of generality. 
\begin{prop}
\label{prop:RanLemma} Let $T\in B(X)$ and $S\in B(Y)$ be compact
Banach space operators that are equivalent after extension. Then there
exist operators $G\in B(Y,X)$, $H\in B(X,Y)$ and a finite rank operator
$R\in B(X)$ such that $T=GSH+R$. \end{prop}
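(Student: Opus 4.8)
The plan is to read the required relation directly off the block form of the equivalence-after-extension data, and then to upgrade a compact perturbation to a genuinely finite rank one by Fredholm theory. By Remark~\ref{rem:elem-eae-properties} I may take $X'=Y$ and $Y'=X$, so that there are invertible operators $E\in B(Y\oplus X,X\oplus Y)$ and $F\in B(X\oplus Y,Y\oplus X)$ with $Q=EPF$, where $Q=\smb{T&0\\0&\id_{Y}}$ and $P=\smb{S&0\\0&\id_{X}}$. Writing $E$, $F$, $E^{-1}=\smb{A_{11}&A_{12}\\A_{21}&A_{22}}$ and $F^{-1}=\smb{G_{11}&G_{12}\\G_{21}&G_{22}}$ in $2\times 2$ block form, the $(1,1)$-entry of $Q=EPF$ reads $T=E_{11}SF_{11}+E_{12}F_{21}$. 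Thus the summand $E_{11}SF_{11}$ already has the shape of a conjugation $G_{0}SH$ with $G_{0}=E_{11}\in B(Y,X)$ and $H=F_{11}\in B(X,Y)$, and the whole difficulty is to absorb the remaining summand $E_{12}F_{21}$, together with a correction to $G_{0}$, into a single finite rank operator.

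Next I would record a few auxiliary identities by comparing blocks of the rearranged relations $E^{-1}Q=PF$ and $EE^{-1}=\id_{X\oplus Y}$. The $(2,1)$-entry of $E^{-1}Q=PF$ gives $A_{21}T=F_{21}$, so that $E_{12}F_{21}=E_{12}A_{21}T$; the $(1,1)$-entry of $EE^{-1}=\id_{X\oplus Y}$ gives $E_{11}A_{11}+E_{12}A_{21}=\id_{X}$. Setting $W:=E_{11}A_{11}=\id_{X}-E_{12}A_{21}\in B(X)$, these combine with $T=E_{11}SF_{11}+E_{12}F_{21}$ to yield $WT=T-E_{12}A_{21}T=E_{11}SF_{11}$. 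The step I expect to be the main obstacle is precisely here: at this stage $E_{12}A_{21}$ is only seen to be compact (it is a product involving the compact $T$), so $W$ is merely invertible modulo the compacts and $E_{12}F_{21}$ is merely compact, not finite rank; a naive inversion of $W$ is unavailable.

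The observation that breaks the impasse is that $E_{12}$ is \emph{itself} compact: the $(1,2)$-entry of $QF^{-1}=EP$ gives $E_{12}=TG_{12}$, which is compact since $T$ is. Hence $E_{12}A_{21}$ is compact and $W=\id_{X}-E_{12}A_{21}$ is a Fredholm operator of index zero. Standard Fredholm theory then supplies a finite rank operator $R_{0}\in B(X)$ for which $\tilde W:=W+R_{0}$ is invertible, obtained by mapping a basis of the finite dimensional space $\ker W$ isomorphically onto a complement of the finite codimensional subspace $\range W$ and setting $R_{0}$ to zero on a closed complement of $\ker W$. From $WT=E_{11}SF_{11}$ I then obtain $\tilde W T=E_{11}SF_{11}+R_{0}T$, whence
\[
T=\tilde W^{-1}E_{11}SF_{11}+\tilde W^{-1}R_{0}T=GSH+R,
\]
with $G:=\tilde W^{-1}E_{11}\in B(Y,X)$, $H:=F_{11}\in B(X,Y)$, and $R:=\tilde W^{-1}R_{0}T\in B(X)$. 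Since $R_{0}$ is finite rank, so is $R_{0}T$, and hence $R$, which completes the argument. The only genuinely nontrivial ingredients are the compactness of $E_{12}$, which forces $W$ to be Fredholm of index zero rather than merely Fredholm, and the finite rank correction that converts $W$ into an invertible operator.
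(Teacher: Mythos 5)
Your proof is correct: all four block identities check out ($T=E_{11}SF_{11}+E_{12}F_{21}$ from $Q=EPF$; $F_{21}=A_{21}T$ from $E^{-1}Q=PF$; $E_{11}A_{11}+E_{12}A_{21}=\id_{X}$ from $EE^{-1}=\id_{X\oplus Y}$; $E_{12}=TG_{12}$ from $QF^{-1}=EP$), and the final step --- replacing the index-zero Fredholm operator $W=\id_{X}-E_{12}A_{21}$ by an invertible $\tilde{W}=W+R_{0}$ with $R_{0}$ finite rank --- is the same standard fact the paper invokes. The overall strategy also matches the paper's: manufacture an identity of the form (compact perturbation of the identity) times $T$ equals a conjugation of $S$, then absorb the Fredholm correction. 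Where you genuinely differ is in how that identity is obtained. The paper appeals to \cite[Theorem 2.1]{TerHorstRan}, which allows choosing $E$ and $F$ in the special form $E=\smb{G_{11}&T\\G_{21}&G_{22}}$, $F=\smb{H_{11}&\id_{Y}\\H_{21}T&H_{22}}$; the $(1,1)$ block then reads $T(\id_{X}-H_{21}T)=G_{11}SH_{11}$, with the Fredholm factor on the \emph{right} of $T$, and $\id_{X}-H_{21}T$ is Fredholm of index zero simply because $T$ is compact. You instead work with arbitrary $E,F$ implementing the equivalence and recover the analogous structure from their inverses: $F_{21}=A_{21}T$ and $E_{12}=TG_{12}$ are precisely the facts (``$F_{21}$ factors through $T$ on one side, $E_{12}$ on the other'') that the cited special form encodes, and they yield $(\id_{X}-E_{12}A_{21})T=E_{11}SF_{11}$, with the Fredholm factor on the \emph{left}. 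Your version is thus self-contained where the paper leans on an external structural theorem, at the cost of needing the extra observation $E_{12}=TG_{12}$; without it, $W$ is not known to be Fredholm. This also exposes the one inaccuracy in your narrative: before that observation, it is only $E_{12}A_{21}T=E_{12}F_{21}$, not $E_{12}A_{21}$ itself, that is visibly compact, so your interim remark that ``$E_{12}A_{21}$ is only seen to be compact'' overstates what is available at that stage --- though this is harmless, since the final argument uses only the properly established compactness of $E_{12}$. Note, finally, that like the paper's proof, yours uses only the compactness of $T$, never that of $S$.
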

\begin{proof}
Since $T$ and $S$ are equivalent after extension, there exist invertible
operators $E\in B(Y\oplus X,X\oplus Y)$ and $F\in B(X\oplus Y,Y\oplus X)$
satisfying 
\[
\left[\begin{array}{cc}
T & 0\\
0 & \id_{Y}
\end{array}\right]=E\left[\begin{array}{cc}
S & 0\\
0 & \id_{X}
\end{array}\right]F.
\]
Furthermore, by \cite[Theorem 2.1]{TerHorstRan}, we may choose operators
$G_{11},G_{21},G_{22},H_{11},H_{21}$ and $H_{22}$ in such a way
that 
\[
E=\left[\begin{array}{cc}
G_{11} & T\\
G_{21} & G_{22}
\end{array}\right]\quad\textup{and}\quad F=\left[\begin{array}{cc}
H_{11} & \id_{Y}\\
H_{21}T & H_{22}
\end{array}\right].
\]
From %
\begin{eqnarray*}
\left[\begin{array}{cc}
T & 0\\
0 & \id_{Y}
\end{array}\right] & = & \left[\begin{array}{cc}
G_{11} & T\\
G_{21} & G_{22}
\end{array}\right]\left[\begin{array}{cc}
S & 0\\
0 & \id_{X}
\end{array}\right]\left[\begin{array}{cc}
H_{11} & \id_{Y}\\
H_{21}T & H_{22}
\end{array}\right]\\
 & = & \left[\begin{array}{cc}
G_{11}SH_{11} & G_{11}S\\
G_{21}SH_{11} & G_{21}S
\end{array}\right]+\left[\begin{array}{cc}
TH_{21}T & TH_{22}\\
G_{22}H_{21}T & G_{22}H_{22}
\end{array}\right],
\end{eqnarray*}
we notice that $T=G_{11}SH_{11}+TH_{21}T$, which we may rearrange
to 
\[
T(\id_{X}-H_{21}T)=G_{11}SH_{11}.
\]
The operator $\id_{X}-H_{21}T$ is Fredholm, and of index zero, cf.
\cite[Corollary XI.4.3]{GohbergGoldbergKaashoekLinearClassesI}. Hence
there exists an invertible operator $L\in B(X)$ and finite rank operator
$K\in B(X)$ satisfying $\id_{X}-H_{21}T=L+K$, cf. \cite[Theorem XI.5.3]{GohbergGoldbergKaashoekLinearClassesI}.
Define $G:=G_{11}$, $H:=H_{11}L^{-1}$ and $R:=-TKL^{-1}$. Then
\[
T=G_{11}SH_{11}L^{-1}-TKL^{-1}=GSH+R
\]
and $R$ has finite rank. 
\end{proof}
Before proving Theorem \ref{thm:EAE-Implies-Same-Ideal}, our main
result in this section, we give a number of consequences of the previous
result that we will need in later sections.

The following corollary is an immediate consequence of Proposition
\ref{prop:RanLemma}: 
\begin{cor}
\label{cor:finrank} Let $T\in B(X)$ and $S\in B(Y)$ be compact
Banach space operators that are equivalent after extension. The operator
$T$ has finite rank if and only if $S$ has finite rank. 
\end{cor}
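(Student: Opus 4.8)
The plan is to leverage Proposition~\ref{prop:RanLemma} together with its symmetric counterpart, using the fact that finite rank is an ideal property preserved under composition with arbitrary bounded operators. First I would invoke Proposition~\ref{prop:RanLemma} directly: since $T$ and $S$ are compact and equivalent after extension, there exist $G\in B(Y,X)$, $H\in B(X,Y)$ and a finite rank $R\in B(X)$ with $T=GSH+R$. Suppose now that $S$ has finite rank. Then $GSH$ has finite rank, being a composition of bounded operators with a finite rank operator in the middle; adding the finite rank $R$ keeps the rank finite, so $T=GSH+R$ has finite rank. This establishes one direction.

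For the converse, I would appeal to the symmetry noted in the remark preceding Proposition~\ref{prop:RanLemma}: since equivalence after extension is a symmetric relation, the roles of $T$ and $S$ may be interchanged, yielding operators $G'\in B(X,Y)$, $H'\in B(Y,X)$ and a finite rank $R'\in B(Y)$ with $S=G'TH'+R'$. Running the identical rank argument, if $T$ has finite rank then $G'TH'$ has finite rank and hence so does $S=G'TH'+R'$. Combining both directions gives the stated equivalence.

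I do not expect any genuine obstacle here, since this is an immediate corollary of Proposition~\ref{prop:RanLemma} — indeed the excerpt labels it as such. The only points requiring a moment of care are the standard facts that composing a finite rank operator on either side with a bounded operator again yields a finite rank operator (because $\rank(AB)\le\min\{\rank A,\rank B\}$ and the range of $GSH$ is contained in $G(\range(SH))$, which is the image under $G$ of a finite dimensional space), and that a sum of two finite rank operators is finite rank. Both are elementary, so the proof is essentially a one-line deduction in each direction once the symmetric application of Proposition~\ref{prop:RanLemma} is made explicit.
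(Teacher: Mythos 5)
Your proof is correct and follows exactly the route the paper intends: the paper declares Corollary~\ref{cor:finrank} an immediate consequence of Proposition~\ref{prop:RanLemma}, and your argument (applying $T=GSH+R$ in one direction and the symmetric decomposition $S=G'TH'+R'$ in the other, using that finite rank is stable under composition and sums) is precisely the spelled-out version of that deduction. The paper also offers a second, alternative proof via Lemma~\ref{lem:closed_range_preserved_under_eae} (finite rank $\Leftrightarrow$ closed range for compact operators), but your approach matches the primary one.
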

We remark that in the previous proposition and corollary compactness
of \emph{both} operators is required. Equivalence after extension
of a finite rank operator with a second operator does not imply that
the second operator has finite rank. In fact, in many of the original
examples (cf.~\cite{BartGohbergKaashoek}) it is shown that an integral
operator is equivalent after extension (or rather, matricially coupled)
to an operator on a finite dimensional space from which, amongst others,
it can be concluded that the integral operator is Fredholm.

We briefly give an alternative proof of the previous corollary using
Lemma \ref{lem:closed_range_preserved_under_eae} which may be of
independent interest. 
\begin{lem}
\label{lem:closed_range_preserved_under_eae} Let $T\in B(X)$ and
$S\in B(Y)$ be Banach space operators that are equivalent after extension.
The operator $T$ has closed range if and only if $S$ has closed
range. \end{lem}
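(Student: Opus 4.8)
The plan is to reduce the statement to two elementary facts: that closedness of the range is preserved under pre- and post-composition with invertible operators, and that inflating an operator by an identity block leaves closedness of its range unchanged.

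First, using Remark~\ref{rem:elem-eae-properties}, I would begin from the identity
\[
\smb{T&0\\0&\id_Y}=E\smb{S&0\\0&\id_X}F,
\]
where $E\in B(Y\oplus X,X\oplus Y)$ and $F\in B(X\oplus Y,Y\oplus X)$ are invertible.

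Next I would record two observations. For the first, if $A$ has closed range and $P,Q$ are invertible operators for which $QAP$ is defined, then $\range(QAP)=Q(\range A)$ since $P$ is surjective; as $Q$ is a topological isomorphism it carries closed subspaces onto closed subspaces, so $QAP$ has closed range precisely when $A$ does. For the second, for any $T\in B(X)$ the range of $\smb{T&0\\0&\id_Y}$ equals $\range(T)\oplus Y$, and this subspace of $X\oplus Y$ is closed if and only if $\range(T)$ is closed in $X$. One direction follows since $\range(T)\oplus Y=\pi_X^{-1}(\range T)$ for the continuous coordinate projection $\pi_X\colon X\oplus Y\to X$; the other follows by intersecting $\range(T)\oplus Y$ with the closed subspace $X\oplus\{0\}$, which is topologically isomorphic to $X$.

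Combining these, the left-hand side of the identity has closed range if and only if $T$ does, by the second observation; the right-hand side has closed range if and only if $\smb{S&0\\0&\id_X}$ does, by the first observation together with the invertibility of $E$ and $F$; and the latter holds if and only if $S$ has closed range, again by the second observation. Chaining these equivalences yields the result. I do not anticipate any serious obstacle here; the only points requiring care are the direct-sum bookkeeping (keeping track of which components live in which space) and the elementary topological argument that closedness of $\range(T)\oplus Y$ forces closedness of $\range(T)$, which is dispatched by intersecting with $X\oplus\{0\}$.
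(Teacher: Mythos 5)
Your proof is correct and follows essentially the same route as the paper's: both reduce the statement to the EAE identity $\smb{T&0\\0&\id_Y}=E\smb{S&0\\0&\id_X}F$ together with the two facts that closedness of range is invariant under composition with invertibles and under inflation by an identity block. The only difference is that you spell out the details which the paper dismisses as ``elementary arguments,'' and your verifications (the preimage argument via $\pi_X$ and the intersection with $X\oplus\{0\}$) are sound.
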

\begin{proof}
Let $T$ and $S$ be equivalent after extension. Then there exist
invertible operators $E:Y\oplus X\to X\oplus Y$ and $F:X\oplus Y\to Y\oplus X$,
so that $\smb{T&0\\0&\id_Y}=E\smb{S&0\\0&\id_X}F$. Elementary arguments
will establish that $T$ has closed range if and only if $\smb{T&0\\0&\id_Y}$
has closed range, and also that $S$ has closed range if and only
if $\smb{S&0\\0&\id_X}$ has closed range. Since $E$ and $F$ are
invertible, $\smb{T&0\\0&\id_Y}$ has closed range if and only if
$\smb{S&0\\0&\id_X}$ has closed range. 
\end{proof}

\begin{proof}[Alternative proof of Corollary \ref{cor:finrank}]
Let $T\in B(X)$ and $S\in B(Y)$ be compact Banach space operators
that are equivalent after extension. Assume $T$ has finite rank.
Then $T$ has closed range, and hence, by Lemma \ref{lem:closed_range_preserved_under_eae},
$S$ also has closed range. Since $S$ is compact and has closed range
it must have finite rank. The converse follows similarly. 
\end{proof}

Using Proposition \ref{prop:RanLemma}, our main result in this section
becomes a matter of routine: 
\begin{thm}
\label{thm:EAE-Implies-Same-Ideal} Let $T\in B(X)$ and $S\in B(Y)$
be non-zero compact Banach space operators. If $T$ and $S$ are equivalent
after extension, then $I_{T}=I_{S}$. \end{thm}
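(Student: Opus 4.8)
The plan is to deduce $I_T = I_S$ directly from the ``finite rank perturbed conjugation'' relationship furnished by Proposition \ref{prop:RanLemma}, by showing the two inclusions $I_T \subseteq I_S$ and $I_S \subseteq I_T$ separately. By the symmetry of equivalence after extension (noted just before Proposition \ref{prop:RanLemma}), it suffices to establish one inclusion, say $I_T \subseteq I_S$, since interchanging the roles of $T$ and $S$ then yields the reverse inclusion and hence equality.

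To prove $I_T \subseteq I_S$, apply Proposition \ref{prop:RanLemma} to obtain operators $G \in B(Y,X)$, $H \in B(X,Y)$ and a finite rank operator $R \in B(X)$ with $T = GSH + R$. Now take an arbitrary element of $I_T(Z_1,Z_2)$, which has the form $\sum_{j=1}^n R_j T R_j'$ with $R_j \in B(X,Z_2)$ and $R_j' \in B(Z_1,X)$. Substituting $T = GSH + R$ gives
\[
\sum_{j=1}^n R_j T R_j' = \sum_{j=1}^n R_j (GSH) R_j' + \sum_{j=1}^n R_j R R_j' = \sum_{j=1}^n (R_j G) S (H R_j') + \sum_{j=1}^n R_j R R_j'.
\]
The first sum is manifestly an element of $I_S(Z_1,Z_2)$, since $R_j G \in B(Y,Z_2)$ and $H R_j' \in B(Z_1,Y)$. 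The second sum is a finite rank operator in $B(Z_1,Z_2)$, and since $S$ is a non-zero compact operator, $I_S$ contains all finite rank operators (as remarked after the definition of $I_T$); hence the second sum also lies in $I_S(Z_1,Z_2)$. As an operator ideal, $I_S$ is closed under addition within each component $B(Z_1,Z_2)$, so the whole expression belongs to $I_S$. This shows $I_T \subseteq I_S$.

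The only subtlety worth checking is that $I_S$ genuinely contains all finite rank operators and is closed under the relevant sums; both facts follow from $S \neq 0$ together with the remark, immediately after the definition, that $I_S$ is an operator ideal in the sense of Pietsch containing all finite rank operators. Since $T \neq 0$ as well, the symmetric argument is available without degeneracy. I do not anticipate a genuine obstacle here: once Proposition \ref{prop:RanLemma} is in hand, the argument is the routine ``absorb the finite rank remainder into the ideal'' computation, which is exactly why the statement can be called a matter of routine. The essential content has already been done in establishing the perturbed conjugation relation.
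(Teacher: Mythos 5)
Your proof is correct and follows essentially the same route as the paper's: both deduce $T = GSH + R \in I_S$ from Proposition \ref{prop:RanLemma} and absorb the finite rank remainder using $S \neq 0$, the only cosmetic difference being that you spell out the verification on general elements $\sum_j R_j T R_j'$ and invoke symmetry once, where the paper applies the proposition in both directions and writes ``similarly.''
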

\begin{proof}
By Proposition \ref{prop:RanLemma}, there exist operators $G,H,G',H'$
and finite rank operators $R$ and $R'$ such that $T=GSH+R$ and
$S=G'TH'+R'$. We note that $GSH\in\operatorideal S$, and also, since
$R$ is of finite rank, that $R\in\operatorideal S$. We conclude
that $T=GSH+R\in\operatorideal S$, and hence $\operatorideal T\subseteq\operatorideal S$.
Similarly, $\operatorideal S\subseteq\operatorideal T$, and hence,
$\operatorideal T=\operatorideal S$. 
\end{proof}
The converse of Theorem \ref{thm:EAE-Implies-Same-Ideal} is false.
We will briefly elaborate on this claim.

In \cite{Schatten} Schatten characterized ideals of compact operators
on Hilbert spaces in terms of the properties of their singular values.
For ideals generated by single compact operators \cite[Theorem 12]{Schatten}
specializes to the following: 
\begin{thm}
\label{thm:SchattenIdealCharacterizationSpecialized}Let $T$ and
$S$ be compact operators on a Hilbert space $H$ and let $\{t_{n}\}$
and $\{s_{n}\}$ denote their respective sequences of singular values.
The following are equivalent: 
\begin{enumerate}
\item The operators $T$ and $S$ generate the same ideal in $B(H)$. 
\item There exist constants $M>0$ and $m\in\N$ such that both 
\[
t_{m(n-1)+j}\leq Ms_{n}\quad\text{and}\quad s_{m(n-1)+j}\leq Mt_{n},
\]
hold for all $n\in\N$, and $j\in\{1,\ldots,m-1\}$. 
\end{enumerate}
\end{thm}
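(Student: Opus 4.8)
The plan is to prove the two implications separately, in both cases exploiting the singular value decomposition of a compact Hilbert space operator together with two standard properties of singular values: the multiplicativity estimate $s_n(ASB)\le\norm A\,s_n(S)\,\norm B$ under composition, and Ky Fan's subadditivity inequality $s_{i+j-1}(A+B)\le s_i(A)+s_j(B)$, which iterates to $s_{k_1+\cdots+k_m-(m-1)}(A_1+\cdots+A_m)\le\sum_{i=1}^m s_{k_i}(A_i)$. Write $T=\sum_n t_n\langle\cdot,e_n\rangle f_n$ and $S=\sum_n s_n\langle\cdot,g_n\rangle h_n$ for the singular value decompositions, with $\{e_n\},\{f_n\},\{g_n\},\{h_n\}$ orthonormal systems and the $s$-numbers arranged in decreasing order.

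For the implication $(2)\Rightarrow(1)$, by symmetry it suffices to show $T\in\operatorideal S$ using the inequality $t_{m(n-1)+j}\le Ms_n$; the reverse membership $S\in\operatorideal T$ follows identically from the companion inequality. First I would partition the index set by residue, writing each $k\in\N$ uniquely as $k=m(n-1)+j$ with $j\in\{1,\dots,m\}$, so that $T=\sum_{j=1}^m T_j$ with $T_j:=\sum_n t_{m(n-1)+j}\langle\cdot,e_{m(n-1)+j}\rangle f_{m(n-1)+j}$. For each fixed $j$, I would realize $T_j=R_jSR_j'$ by taking the partial isometry $R_j':=\sum_n\langle\cdot,e_{m(n-1)+j}\rangle g_n$, so that $SR_j'=\sum_n s_n\langle\cdot,e_{m(n-1)+j}\rangle h_n$, together with the operator $R_j:=\sum_n\frac{t_{m(n-1)+j}}{s_n}\langle\cdot,h_n\rangle f_{m(n-1)+j}$, the sum taken over those $n$ with $s_n>0$. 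The hypothesis $t_{m(n-1)+j}\le Ms_n$ forces $t_{m(n-1)+j}=0$ whenever $s_n=0$ and bounds the coefficients of $R_j$ by $M$, so $R_j\in B(H)$ with $\norm{R_j}\le M$; a direct computation then gives $R_jSR_j'=T_j$, whence $T=\sum_{j=1}^m R_jSR_j'\in\operatorideal S$.

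For the implication $(1)\Rightarrow(2)$, the assumption $T\in\operatorideal S$ furnishes $m_1\in\N$ and operators $R_1,\dots,R_{m_1},R_1',\dots,R_{m_1}'\in B(H)$ with $T=\sum_{j=1}^{m_1}R_jSR_j'$. Setting $C:=\max_j\norm{R_j}\norm{R_j'}$, the composition estimate gives $s_n(R_jSR_j')\le C s_n$ for every $j$, and applying the iterated Ky Fan inequality with $k_1=\cdots=k_{m_1}=n$ yields $t_{m_1(n-1)+1}=s_{m_1(n-1)+1}(T)\le\sum_{j=1}^{m_1}s_n(R_jSR_j')\le m_1C\,s_n$. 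By monotonicity of the singular values, $t_{m_1(n-1)+j}\le t_{m_1(n-1)+1}\le m_1C\,s_n$ for every $j\ge1$, which is the first desired inequality with constant $M_1:=m_1C$. The companion inequality follows symmetrically from $S\in\operatorideal T$, producing constants $m_2,M_2$. Finally I would unify these by setting $m:=\max\{m_1,m_2\}$ and $M:=\max\{M_1,M_2\}$; since $m(n-1)+j\ge m_i(n-1)+1$ whenever $m\ge m_i$ and $j\ge1$, monotonicity of the singular values upgrades both inequalities to hold with the common pair $(m,M)$.

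The routine parts are the boundedness verifications in the first implication and the bookkeeping with constants in the second. The crux of the argument is the index arithmetic in the hard direction: the multiplicativity estimate alone only controls $s_n(R_jSR_j')$ term by term, and it is precisely Ky Fan's inequality, evaluated at the diagonal $k_1=\cdots=k_{m_1}=n$, that converts a sum of $m_1$ such terms into a bound on the singular value of $T$ at the stretched index $m_1(n-1)+1$. Recognizing that this stretched index is exactly what appears in condition $(2)$, and that monotonicity then fills in the remaining values of $j$, is the main point to get right.
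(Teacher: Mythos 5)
Your proof is correct, but there is no in-paper proof to compare it against: the paper quotes this theorem as a specialization of Schatten's result \cite[Theorem 12]{Schatten} and uses it as a black box. The closest relative in the paper is Proposition \ref{prop:Schatten-IT=00003D00003D00003DTS-imply-s-number-relation}, which proves precisely your implication (1) $\Rightarrow$ (2) in the more general setting of $s$-numbers of Banach space operators; your argument for that direction is essentially the same as the paper's, since the iterated Ky Fan inequality you invoke is axiom (2) in the paper's definition of an $s$-function, applied at the diagonal index $k_{1}=\cdots=k_{m_{1}}=n$, followed by the multiplicativity axiom and monotonicity. Your implication (2) $\Rightarrow$ (1) supplies the direction the paper nowhere proves, and the construction is sound: splitting $T$ into the $m$ residue-class pieces $T_{j}$ and realizing each as $R_{j}SR_{j}'$, with $R_{j}'$ a partial isometry matching one orthonormal system to another and $R_{j}$ a bounded diagonal-type operator with coefficients $t_{m(n-1)+j}/s_{n}\leq M$, is a clean way to do it. It also isolates exactly the Hilbert-space ingredients (Schmidt expansions, and isometries carrying any orthonormal system onto any other) that have no Banach-space counterpart, which is consistent with the fact that the paper can only generalize the other direction, and with Example \ref{exa:n-1_on_lp_and_lq_not_eae} showing that this kind of sufficiency fails for Banach spaces.

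One index point should be patched in (2) $\Rightarrow$ (1). Your partition writes every $k\in\N$ as $k=m(n-1)+j$ with $j\in\{1,\ldots,m\}$, but hypothesis (2) as stated only controls $j\in\{1,\ldots,m-1\}$, so the coefficients $t_{mn}/s_{n}$ of your operator $R_{m}$ are not directly bounded by the hypothesis. Monotonicity of singular values fixes this: $t_{mn}\leq t_{m(n-1)+1}\leq Ms_{n}$, provided $m\geq2$. The assumption $m\geq2$ is harmless, and indeed necessary to make sense of the theorem as quoted: for $m=1$ the $j$-range in (2) is empty, making the condition vacuous and the implication (2) $\Rightarrow$ (1) false as literally stated, so the intended reading (Schatten's) must have $m\geq2$ or a $j$-range including $j=m$; and once $m\geq2$, condition (2) for a pair $(M,m)$ implies condition (2) for $(M,m+1)$, again by monotonicity, since $(m+1)(n-1)+j\geq m(n-1)+1$ for all $n\in\N$ and $j\geq1$. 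With that one line added, both implications are complete.
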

With the previous result, one can easily find examples of compact
operators on $\ell^{2}$ that are not equivalent after extension by
finding compact operators that do not generate the same ideal. E.g.,
the compact diagonal operators $[n^{-1}]$ and $[2^{-n}]$ on $\ell^{2}$
are not equivalent after extension, where, for any bounded sequence
$\{a_{n}\}_{n\in\N}\subseteq\C$, by $[a_{n}]\in B(\ell^{2})$ we
denote the diagonal operator $[a_{n}]:(x_{1},x_{2},\ldots)\mapsto(a_{1}x_{1},a_{2}x_{2},\ldots)$
with $(x_{1},x_{2},\ldots)\in\ell^{2}$.

In \cite{Timotin} Timotin established the following characterization
connecting the equivalence after extension of compact operators on
Hilbert spaces to their singular values satisfying a specific relationship
\cite[Theorem 6.3]{Timotin}. 
\begin{thm}
\label{thm:TimotinForCompactOperators} Let $T$ and $S$ be compact
operators on Hilbert spaces and let $\{t_{n}\}$ and $\{s_{n}\}$
denote their respective sequences of singular values. The following
are equivalent: 
\begin{enumerate}
\item The operators $T$ and $S$ are equivalent after extension. 
\item There exist constants $\delta\in(0,1)$ and $m\in\N$, such that either
\[
\delta\leq\frac{s_{n}}{t_{n+m}}\leq\delta^{-1}\quad\text{or}\quad\delta\leq\frac{t_{n}}{s_{n+m}}\leq\delta^{-1}
\]
holds for all $n\in\N$. 
\end{enumerate}
\end{thm}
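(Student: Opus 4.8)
The plan is to establish both implications, beginning with a reduction showing that the whole statement depends only on the singular values. Writing the polar decomposition $T=U|T|$ and diagonalising the positive compact operator $|T|$ by the spectral theorem produces a unitary carrying $|T|$ to the diagonal operator $D_{t}=\operatorname{diag}(t_{1},t_{2},\dots)$ on $\ell^{2}$; completing the partial isometry $U$ to an invertible operator on a suitable extension then shows that $T$ is equivalent after extension to $D_{t}$, and likewise $S$ to $D_{s}$. Since equivalence after extension is an equivalence relation, I may henceforth assume $T=D_{t}$ and $S=D_{s}$ are positive diagonal compact operators on $\ell^{2}$, the extension spaces again being infinite-dimensional Hilbert spaces. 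The two standard facts about singular (equivalently, approximation) numbers that I would use repeatedly are the Fan-type inequalities $s_{i+j-1}(A+B)\le s_{i}(A)+s_{j}(B)$ and $s_{n}(GAH)\le\norm G\,s_{n}(A)\,\norm H$.

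For the sufficiency $(2)\Rightarrow(1)$, assume without loss of generality the first alternative $\delta\le s_{n}/t_{n+m}\le\delta^{-1}$, and separate the comparison into a scaling part and a shift part. The scaling is harmless: the diagonal operator $\Lambda=\operatorname{diag}(s_{n}/t_{n+m})$ is bounded and boundedly invertible, so that $D_{s}=\Lambda^{1/2}\operatorname{diag}(t_{n+m})\Lambda^{1/2}$ exhibits $D_{s}$ as plainly \emph{equivalent} (no extension required) to the tail $\operatorname{diag}(t_{n+m})$. The shift by $m$ is the real content. The forward shift $R\colon e_{n}\mapsto e_{n+m}$ satisfies $R^{\ast}D_{t}R=\operatorname{diag}(t_{n+m})$ and is an isometry whose cokernel has dimension $m$; completing $R$ to an invertible operator on $\ell^{2}\oplus\C^{m}$ and folding the $m$ leading singular values $t_{1},\dots,t_{m}$ into the identity-extension block, I would assemble invertible $E$ and $F$ realising the block identity of Definition \ref{def:operatorrels}(1). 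The one delicate point here is the bookkeeping needed to keep the extension block invertible after the top $m$ singular values have been absorbed.

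The necessity $(1)\Rightarrow(2)$ is the hard direction. Applying Proposition \ref{prop:RanLemma} in both directions gives $T=GSH+R$ and $S=G'TH'+R'$ with $R,R'$ of finite rank $r,r'$; the Fan inequalities then yield $t_{n+r}\le\norm G\norm H\,s_{n}$ and $s_{n+r'}\le\norm{G'}\norm{H'}\,t_{n}$ for all $n$, equivalently the two-sided sandwich $c^{-1}t_{n+m_{0}}\le s_{n}\le c\,t_{n-m_{0}}$ with $m_{0}=\max(r,r')$, i.e.\ a comparison of the counting functions $N_{t}(\lambda)=\#\curly{n:t_{n}>\lambda}$ and $N_{s}(\lambda)=\#\curly{n:s_{n}>\lambda}$ up to the additive constant $m_{0}$ and a bounded rescaling of $\lambda$. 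This is exactly the $s$-number relation recorded in Proposition \ref{lem:s-numbers-consequence-of-ran-lemma}, and it already forces $I_{T}=I_{S}$ via Theorem \ref{thm:SchattenIdealCharacterizationSpecialized}. It is, however, \emph{strictly weaker} than $(2)$: the sandwich permits the local index-shift $N_{t}(s_{n})-n$ to oscillate anywhere in $[-m_{0},m_{0}]$, whereas $(2)$ demands a single fixed shift $m$, and when the $t_{n}$ have large multiplicative gaps over windows of length $m_{0}$ a varying shift cannot be replaced by a constant one. Closing precisely this gap---upgrading the variable-shift comparison supplied by Proposition \ref{prop:RanLemma} to the fixed-shift statement $(2)$, together with the selection of which of the two alternatives holds---is the main obstacle, and it is here that I would invoke the spectral theorem and the homogeneity of Hilbert space to transport spectral subspaces through the invertible operators $E,F$ and pin down a common shift. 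This is exactly the step with no Banach-space analogue, which is why the full characterisation fails for $\ell^{p}$-spaces (cf.\ Example \ref{exa:n-1_on_lp_and_lq_not_eae}).
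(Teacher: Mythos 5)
First, a point of order: the paper does not prove Theorem \ref{thm:TimotinForCompactOperators} at all; it is quoted from \cite[Theorem 6.3]{Timotin} and used downstream only as a black box (in Examples \ref{exa:generating-same-ideal-not-sufficient-for-eae} and \ref{exa:n-1_on_lp_and_lq_not_eae}). Your proposal is therefore an attempt to reprove Timotin's theorem from scratch, and it does not succeed. The decisive gap is in the necessity direction $(1)\Rightarrow(2)$. What you actually establish there, via Proposition \ref{prop:RanLemma} and the $s$-number axioms, is the pair of one-sided bounds $t_{n+m_{0}}\leq c\,s_{n}$ and $s_{n+m_{0}}\leq c\,t_{n}$; this is exactly Proposition \ref{lem:s-numbers-consequence-of-ran-lemma} applied in both directions. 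You then correctly observe that this ``sandwich'' is strictly weaker than condition (2): it yields one inequality from \emph{each} of the two alternatives, rather than both inequalities from a \emph{single} alternative with one fixed shift. The upgrade from the variable-shift sandwich to the fixed-shift statement, together with the selection of which alternative holds, is the entire content of the theorem --- and for it you offer only the declaration that you ``would invoke the spectral theorem and the homogeneity of Hilbert space''. No argument is given, so the hard direction is missing, not merely sketched.

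Second, your preliminary reduction is false, and this breaks the sufficiency direction $(2)\Rightarrow(1)$ as well. It is not true that every compact operator $T$ is equivalent after extension to the diagonal operator $D_{t}$ of its singular values. Since $E$ and $F$ in Definition \ref{def:operatorrels}(1) are invertible, one has $\ker(T\oplus\id_{X'})=F^{-1}\ker(S\oplus\id_{Y'})$, so equivalence after extension preserves the dimension of the kernel (and, by the same argument applied to ranges, of the algebraic cokernel); singular values, by contrast, are blind to kernels. Concretely, $T=[2^{-n}]\oplus 0_{\C}$ and $D_{t}=[2^{-n}]$ have identical singular value sequences, yet $\dim\ker T=1\neq0=\dim\ker D_{t}$, so they are \emph{not} equivalent after extension. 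Hence ``completing the partial isometry $U$'' cannot produce the claimed reduction, and the kernel/cokernel bookkeeping you treat as routine is precisely where a correct proof (and a precise reading of the theorem, i.e.\ of the convention attached to the singular value sequences) must do real work; note also that ``folding $t_{1},\dots,t_{m}$ into the identity-extension block'' requires these values to be nonzero, so finite rank operators need separate treatment. In short: the sufficiency argument rests on a false lemma, and the necessity argument is absent.
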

Using this result, we can now show that the converse of Theorem \ref{thm:EAE-Implies-Same-Ideal}
is false:
\begin{example}
\label{exa:generating-same-ideal-not-sufficient-for-eae}Consider
the two compact diagonal operators $[2^{-n}]$ and $[2^{-2n}]$ on
$\ell^{2}$. We note that, for any $m,n,j\in\N$, since $2^{-2mn}\leq2^{-n}$
and $2^{-2j}\leq1$, that 
\begin{eqnarray*}
2^{-2(m(n-1)+j)} & = & 2^{2m}2^{-2mn}2^{-2j}\\
 & \leq & 2^{2m}2^{-n}.
\end{eqnarray*}
Similarly, if, in addition $m\geq2$, then $2^{-mn}\leq2^{-2n}$ and
$2^{m}\leq2^{2m}$, so that 
\begin{eqnarray*}
2^{-(m(n-1)+j)} & = & 2^{m}2^{-mn}2^{-j}\\
 & \leq & 2^{m}2^{-2n}\\
 & \leq & 2^{2m}2^{-2n}.
\end{eqnarray*}
Therefore, by Theorem \ref{thm:SchattenIdealCharacterizationSpecialized}
(therein taking $m:=2$ and $M:=2^{4}=16$), $[2^{-n}]$ and $[2^{-2n}]$
generate the same ideal on $\ell^{2}$. Hence we conclude that $\operatorideal{[2^{-n}]}=\operatorideal{[2^{-2n}]}$.

On the other hand, for any fixed $m\in\N$, 
\[
\frac{2^{-n}}{2^{-2(n+m)}}=2^{n+2m}\to\infty
\]
and 
\[
\frac{2^{-2n}}{2^{-(n+m)}}=\frac{1}{2^{n-m}}\to0,
\]
as $n\to\infty$. Hence, by Theorem \ref{thm:TimotinForCompactOperators},
the operators $[2^{-n}]$ and $[2^{-2n}]$ on $\ell^{2}$ are not
equivalent after extension. 
\end{example}

\section{General $s$-number relationships of compact operators that are equivalent
after extension\label{sec:General-s-number-relationships-for-compact-eae}}

In this section we investigate the possibilities of extending the
Hilbert space case results of Theorems \ref{thm:SchattenIdealCharacterizationSpecialized}
and \ref{thm:TimotinForCompactOperators} to the Banach space setting.
For both results we only prove (parts of) the implication (1) $\Rightarrow$
(2), where the role of the singular values are now played by $s$-numbers. 

The proof of the reverse implication (2) $\Rightarrow$ (1) in Theorem
\ref{thm:TimotinForCompactOperators} given in \cite{Timotin}, relies
heavily on the fact that the operator relations equivalence after
extension and equivalence after one-sided extension coincide, which
is not the case in the general Banach space setting, as will be shown
in the next section. Example \ref{exa:n-1_on_lp_and_lq_not_eae} gives
an explicit example where the implication (2) $\Rightarrow$ (1) from
Theorem \ref{thm:TimotinForCompactOperators} fails for compact Banach
space operators, with specific choices of $s$-numbers playing the
role of the operators' singular values.

We begin by defining $s$-functions and $s$-numbers which play the
same role as singular values of operators on Hilbert spaces. For a
more complete treatment of these objects we refer the reader to \cite{PietschOperatorIdeals,PietschEigenvaluesAndSNumbers}. 
\begin{defn}
By an\emph{ $s$-function}\footnote{Pietsch's axioms for $s$-functions across \cite{PietschEigenvaluesAndSNumbers,PietschOperatorIdeals}
are different. What we call an $s$-function, Pietsch calls an \emph{additive}
$s$-function in \cite{PietschOperatorIdeals}, and an $s$-scale
in \cite{PietschEigenvaluesAndSNumbers}.}\emph{ }we will mean a rule for assigning to any operator $T\in B(X,Y)$
for any Banach spaces $X$ and $Y$, a sequence of numbers $\{s_{n}(T)\}$,
\emph{the sequence of $s$-numbers of $T$}, satisfying the following
conditions: 
\begin{enumerate}
\item For every $T\in B(X,Y)$, $\norm T=s_{1}(T)\geq s_{2}(T)\geq\ldots\geq0$. 
\item For every $m,n\in\N$ and $S,T\in B(X,Y)$, $s_{n+m-1}(S+T)\leq s_{n}(S)+s_{m}(T).$ 
\item For $T\in B(X,Y)$, $S\in B(Y,Z_{2})$ and $R\in B(Z_{1},X)$, with
$Z_{1}$ and $Z_{2}$ arbitrary Banach spaces, and for every $n\in\N$,
$s_{n}(STR)\leq\norm S\norm Rs_{n}(T).$ 
\item If $T\in B(X,Y)$ and $\rank T<n$, then $s_{n}(T)=0$. 
\item For all $n\in\N$, $s_{n}(\id_{\ell_{n}^{2}})=1$, where $\ell_{n}^{2}$
denotes $\mathbb{C}^{n}$ with the $\ell^{2}$-norm.
\end{enumerate}
\end{defn}
Since in the case of Hilbert space operators all $s$-functions coincide
(with the singular values) \cite[Theorem 11.3.4]{PietschOperatorIdeals},
the following result generalizes the necessity of (2) in Theorem \ref{thm:SchattenIdealCharacterizationSpecialized}. 
\begin{prop}
\label{prop:Schatten-IT=00003D00003D00003DTS-imply-s-number-relation}
Let $T\in B(X)$ and $S\in B(Y)$ be Banach spaces operators. If $\operatorideal T=\operatorideal S$,
then there exist constants $M>0$ and $m\in\N$ such that, for any
$s$-function $s$, 
\[
s_{m(n-1)+j}(T)\leq Ms_{n}(S)\quad\text{and}\quad s_{m(n-1)+j}(S)\leq Ms_{n}(T)\quad(n\in\N)
\]
for all $j\in\{0,\ldots,m-1\}$. \end{prop}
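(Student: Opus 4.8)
The statement says: if $I_T = I_S$, then there exist $M>0$ and $m\in\N$ so that, for every $s$-function, $s_{m(n-1)+j}(T)\le M s_n(S)$ and symmetrically. So let me think about what $I_T = I_S$ gives me and how the axioms for $s$-functions convert that algebraic fact into the numerical inequality.

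The plan is to convert the algebraic hypothesis $I_{T}=I_{S}$ into a concrete finite representation and then run it through the $s$-function axioms. Since $T\in I_{T}=I_{S}$ and $T\in B(X,X)$, the definition of $I_{S}(X,X)$ supplies $m_{0}\in\N$ together with operators $R_{1},\dots,R_{m_{0}}\in B(Y,X)$ and $R_{1}',\dots,R_{m_{0}}'\in B(X,Y)$ for which $T=\sum_{j=1}^{m_{0}}R_{j}SR_{j}'$; symmetrically $S\in I_{T}$ yields $S=\sum_{i=1}^{m_{0}'}Q_{i}TQ_{i}'$. The key point is that these representations, and hence all constants extracted below, depend only on $T$ and $S$ and not on the $s$-function, so a single pair $(m,M)$ will serve for every $s$-function at once.

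First I would iterate the additivity axiom (2): a straightforward induction on $r$ gives, for all $A_{1},\dots,A_{r}$ and $n_{1},\dots,n_{r}\in\N$,
\[
s_{(n_{1}+\cdots+n_{r})-(r-1)}(A_{1}+\cdots+A_{r})\le s_{n_{1}}(A_{1})+\cdots+s_{n_{r}}(A_{r}),
\]
the case $r=2$ being axiom (2) itself. Applying this to $A_{j}=R_{j}SR_{j}'$ with every $n_{j}=n$ lands on the index $m_{0}(n-1)+1$, while axiom (3) bounds each summand by $s_{n}(R_{j}SR_{j}')\le\norm{R_{j}}\norm{R_{j}'}s_{n}(S)$. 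Writing $M_{1}:=\sum_{j=1}^{m_{0}}\norm{R_{j}}\norm{R_{j}'}$, this produces
\[
s_{m_{0}(n-1)+1}(T)\le M_{1}s_{n}(S)\qquad(n\in\N),
\]
and the identical computation for $S$ gives $M_{1}':=\sum_{i}\norm{Q_{i}}\norm{Q_{i}'}$ with $s_{m_{0}'(n-1)+1}(S)\le M_{1}'s_{n}(T)$.

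It remains to widen the single index $m_{0}(n-1)+1$ to the whole block $\{m(n-1),\dots,m(n-1)+m-1\}$, and this bookkeeping is the only delicate step. I would set $m:=\max(m_{0},m_{0}')+1$ and $M:=\max(M_{1},M_{1}')$. For $n\ge2$ and any $j\in\{0,\dots,m-1\}$ one has $m(n-1)+j\ge m(n-1)=m_{0}(n-1)+(m-m_{0})(n-1)\ge m_{0}(n-1)+1$, because $m-m_{0}\ge1$ and $n-1\ge1$; hence the monotonicity in axiom (1) combined with the displayed bound yields $s_{m(n-1)+j}(T)\le s_{m_{0}(n-1)+1}(T)\le Ms_{n}(S)$. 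For $n=1$ and $j\ge1$ the index equals $j\ge1$, and axiom (1) gives directly $s_{j}(T)\le s_{1}(T)=\norm{T}\le M_{1}\norm{S}=Ms_{1}(S)$ (the value $j=0$ yielding the vacuous index $0$). The chain for $S$ is identical. The genuinely subtle point --- and the reason $m$ must \emph{exceed} the number $m_{0}$ of terms in the representation rather than equal it --- is precisely the inclusion of $j=0$: taking $m=m_{0}$ would place the index $m_{0}(n-1)$ just below the threshold $m_{0}(n-1)+1$ at which our estimate begins, so that monotonicity would run in the wrong direction.
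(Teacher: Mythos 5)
Your proof is correct, and it follows the same overall strategy as the paper's: extract finite representations $T=\sum_{j=1}^{m_{0}}R_{j}SR_{j}'$ and $S=\sum_{i=1}^{m_{0}'}Q_{i}TQ_{i}'$ from $\operatorideal T=\operatorideal S$, iterate the additivity axiom (2), bound each summand via axiom (3), and widen to a whole block of indices by monotonicity. The noteworthy difference is your choice $m=\max(m_{0},m_{0}')+1$ rather than taking $m$ equal to the (padded) number of terms, and this is not mere pedantry: the paper pads both representations to a common length $m$ and then claims $s_{m(n-1)}(S)\le\sum_{j=1}^{m}s_{n}(R_{j}TR_{j}')$, but iterating axiom (2) over $m$ summands with every index equal to $n$ only reaches the index $mn-(m-1)=m(n-1)+1$; to land on $m(n-1)$ one of the $m$ indices must be lowered to $n-1$ (which, moreover, is $0$ when $n=1$). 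So the paper's own handling of the case $j=0$ has an off-by-one gap: for instance, with $T=S$ and a one-term representation its intermediate claim would read $s_{n-1}(T)\le Ms_{n}(T)$ for all $n$, which fails for any fixed $M$ when, say, $s_{n}(T)=2^{-2^{n}}$ (a diagonal operator on $\ell^{2}$ with these entries). Your enlargement of $m$ by one, which gives $m(n-1)\ge m_{0}(n-1)+1$ for all $n\ge2$, together with your separate treatment of $n=1$ via $s_{1}(T)=\|T\|$, is exactly what repairs this, and you flag the point explicitly --- so your write-up is in fact more careful than the paper's at precisely the step where the paper slips. The only blemish is cosmetic: in your $n=1$ chain the final link should read $M_{1}\|S\|\le Ms_{1}(S)$ rather than an equality, since $M=\max(M_{1},M_{1}')$.
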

\begin{proof}
If $\operatorideal T=\operatorideal S$, then there exists a constant
$m$ and operators $R_{j},R_{j}'''\in B(X,Y)$ and $R_{j}',R_{j}''\in B(Y,X)$
(by choosing some to be zero, if need be) for $j\in\{1,\ldots,m\}$,
such that $S=\sum_{j=1}^{m}R_{j}TR_{j}'$ and $T=\sum_{j=1}^{m}R_{j}''SR_{j}'''$.
For all $n\in\N$, by the properties of $s$-functions, we have

\begin{eqnarray*}
s_{m(n-1)}(S) & = & s_{mn-m}(S)=s_{mn-m}\parenth{\sum_{j=1}^{m}R_{j}TR_{j}'}\\
 & = & s_{n+(m-1)n-(m-1)-1}\parenth{\sum_{j=1}^{m}R_{j}TR_{j}'}\\
 & \leq & s_{n}\parenth{R_{1}TR_{1}'}+s_{(m-1)n-(m-1)}\parenth{\sum_{j=2}^{m}R_{j}TR_{j}'}\\
 & \ldots\\
 & \leq & \sum_{j=1}^{m}s_{n}(R_{j}TR_{j}')\leq\parenth{\sum_{j=1}^{m}\norm{R_{j}}\norm{R_{j}'}}s_{n}(T).
\end{eqnarray*}
Similarly, we obtain $s_{m(n-1)}(T)\leq\parenth{\sum_{j=1}^{m}\norm{R_{j}''}\norm{R_{j}'''}}s_{n}(S)$
for all $n\in\N$. Taking $M:=\max\curly{\sum_{j=1}^{m}\norm{R_{j}}\norm{R_{j}'},\sum_{j=1}^{m}\norm{R_{j}''}\norm{R_{j}'''}}$,
and since $s$-numbers are decreasing, we obtain 
\[
s_{m(n-1)+j}(T)\leq s_{m(n-1)}(T)\leq Ms_{n}(S)
\]
and 
\[
s_{m(n-1)+j}(S)\leq s_{m(n-1)}(S)\leq Ms_{n}(T),
\]
for all $n\in\N$ and $j\in\{0,\ldots,m-1\}.$ 
\end{proof}
The following lemma shows that even in the Banach space case one of
the inequalities in (2) of Theorem \ref{thm:TimotinForCompactOperators}
is still implied by equivalence after extension, without any additional
assumptions. 
\begin{prop}
\label{lem:s-numbers-consequence-of-ran-lemma} Let $s$ be any $s$-function.
Let $T\in B(X)$ and $S\in B(Y)$ be compact Banach space operators
that are equivalent after extension. Let $G\in B(Y,X)$, $H\in B(X,Y)$
and $R\in B(X)$ such that $T=GSH+R$, where $R$ is of finite rank
(cf. Proposition \textup{\ref{prop:RanLemma}}). Then, for every $n\in\N$
and $m\geq\rank\,R$, 
\[
s_{n+m}(T)\leq\|G\|\|H\|s_{n}(S).
\]
\end{prop}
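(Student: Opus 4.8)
The plan is to read the estimate directly off the axioms of an $s$-function, applied to the decomposition $T=GSH+R$ supplied by Proposition~\ref{prop:RanLemma}; essentially no new idea is needed beyond a careful choice of indices so that the finite-rank remainder $R$ is annihilated. I would treat $GSH$ and $R$ as the two summands and bound $s_{n+m}(T)$ by a sum of an $s$-number of $GSH$ and an $s$-number of $R$, arranging the indices so the latter is forced to vanish.

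First I would apply the subadditivity axiom (condition (2) in the definition of an $s$-function), which asserts $s_{a+b-1}(A+B)\leq s_{a}(A)+s_{b}(B)$. Applying this to $T=GSH+R$ with the choice $a:=n$ and $b:=m+1$, so that $a+b-1=n+m$, gives $s_{n+m}(T)\leq s_{n}(GSH)+s_{m+1}(R)$. Next I would invoke the rank axiom (condition (4)): since $m\geq\rank R$ we have $\rank R<m+1$, hence $s_{m+1}(R)=0$, and the estimate collapses to $s_{n+m}(T)\leq s_{n}(GSH)$. Finally I would apply the ideal (multiplicativity) axiom (condition (3)) to the product $GSH$, reading $S$ as the middle operator, $G$ as the left factor and $H$ as the right factor, which yields $s_{n}(GSH)\leq\norm{G}\norm{H}s_{n}(S)$. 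Chaining these three inequalities produces the asserted bound $s_{n+m}(T)\leq\norm{G}\norm{H}s_{n}(S)$.

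Since the whole argument is just three substitutions into the axioms, there is no genuine obstacle; the only point demanding care is the index bookkeeping in the subadditivity step. One must pick the split $(a,b)=(n,m+1)$ precisely so that $a+b-1$ equals the target index $n+m$ \emph{and} the second index $b=m+1$ strictly exceeds $\rank R$ (which is exactly what $m\geq\rank R$ guarantees), so that condition (4) makes the $R$-contribution drop out. Any other split either misses the target index or fails to kill the remainder, so verifying this arithmetic is the heart of the (otherwise routine) proof.
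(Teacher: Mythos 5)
Your proof is correct and follows essentially the same route as the paper's: both rest on the subadditivity axiom to split off $R$, the rank axiom to annihilate $s_{\cdot}(R)$, and the multiplicativity axiom to bound $s_{n}(GSH)$ by $\norm{G}\norm{H}s_{n}(S)$. The only (immaterial) difference is bookkeeping: the paper first uses monotonicity to pass from $s_{n+m}(T)$ to $s_{n+\rank R}(T)$ and then splits at $(n,\rank R+1)$, whereas you split directly at $(n,m+1)$, which works equally well since $m+1>\rank R$.
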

\begin{proof}
By the properties of $s$-functions we have 
\begin{eqnarray*}
s_{n+m}(T) & \leq & s_{n+\rank R}(T)=s_{n+(\rank R+1)-1}(T)\\
 & \leq & s_{n+(\rank R+1)-1}(GSH+R)\\
 & \leq & s_{n}(GSH)+s_{\rank R+1}(R)\leq\norm G\norm Hs_{n}(S).
\end{eqnarray*}

\end{proof}

\begin{rem}
The previous result shows that elementary arguments will establish
one direction of the analogous inequalities from Theorem \ref{thm:TimotinForCompactOperators}(2)
for compact Banach space operators that are equivalent after extension.
One can also obtain the reverse inequality if one were to assume,
under the hypothesis of Proposition \ref{lem:s-numbers-consequence-of-ran-lemma},
that $s_{n}(S)=s_{n}(G'TH'+R')\leq Ms_{n+\max\{\rank R',\rank R\}}(G'TH')$
for all $n\in\N$, where $G',H',R'$ are as would be obtained from
Proposition \ref{prop:RanLemma}, with $R'$ finite rank, and satisfying
$S=G'TH'+R'$. This is a somewhat unnatural assumption to make, and
seems to not be easily verified for examples. We therefore omit any
formal treatment of it.
\end{rem}

\section{Equivalence after extension for operators on $\ell^{p}$-spaces}

\label{sec:Equivalence-after-extension-for-lp}

In this section we consider operators on different $\ell^{p}$-spaces.
Here, for $1\leq p\leq\infty$, by $\ell^{p}$ we will denote the
sequence space $\ell^{p}(\N)$. The results obtained here illustrate
that an ``incompatibility'' in the geometry of the underlying Banach
spaces on which operators act has consequences for whether certain
classes of operators can be equivalent after (one-sided) extension.

The fact that all Hilbert spaces have ``the same geometry'' allows
for the establishment of the following result, due to Timotin \cite[Theorem 5.4]{Timotin}: 
\begin{thm}
\label{thm:Timotin-eae-equivalent-to-eaoe} Let $T\in B(H_{1})$ and
$S\in B(H_{2})$ be Hilbert space operators. Then the following are
equivalent: 
\begin{enumerate}
\item The operators $T$ and $S$ are equivalent after extension. 
\item The operators $T$ and $S$ are equivalent after one-sided extension. 
\end{enumerate}
\end{thm}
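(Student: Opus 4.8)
The implication $(2)\Rightarrow(1)$ is immediate from Definition \ref{def:operatorrels}: equivalence after one-sided extension is by definition equivalence after extension in which one of the two extension spaces is chosen trivial, so it is a special case. The content of the theorem is the reverse implication $(1)\Rightarrow(2)$. Assuming $T$ and $S$ equivalent after extension, I would first normalize via Remark \ref{rem:elem-eae-properties} so that
\begin{equation*}
\smb{T&0\\0&\id_{H_2}}=E\,\smb{S&0\\0&\id_{H_1}}\,F
\end{equation*}
for invertible $E$ and $F$; equivalently, $T\oplus\id_{H_2}$ and $S\oplus\id_{H_1}$ are equal after multiplication on both sides by invertible operators. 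The goal is to convert this two-sided extension into a one-sided one, i.e.\ to show that either $T$ by itself is equivalent to $S$ extended by an identity, or $S$ by itself is equivalent to $T$ extended by an identity.

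The first reduction is to positive operators. Writing the polar decompositions $T=U_{T}\abs T$ and $S=U_{S}\abs S$ and invoking the spectral theorem, I would reduce the problem to the analogous statement for the positive operators $\abs T$ and $\abs S$, which the spectral theorem lets me model as multiplication operators on the $L^{2}$-spaces of their spectral measures. What makes this reduction legitimate in Hilbert space, and what has no Banach-space analogue, is that a partial isometry such as $U_{T}$ can always be completed to a unitary once the defect spaces $\ker T$ and $\ker T^{*}$ are enlarged to equal dimension; since any Hilbert space embeds isometrically into any other of at least its dimension, such enlargements are always available inside the extension spaces, and the completed unitaries are the invertibles that move $T,S$ to $\abs T,\abs S$.

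With both operators positive and in multiplication-operator form, the equivalence $T\oplus\id\sim S\oplus\id$ becomes a matching condition on the spectral data of $\abs T$ and $\abs S$, from which I would build a one-sided extension by hand: the homogeneity of Hilbert space lets me realize one of the two extension spaces isometrically \emph{inside} the other, collapsing the corresponding identity block so that it acts on $\{0\}$. A dimension count on the kernel and cokernel (and spectral-multiplicity) spaces decides which of the two one-sided extensions --- absorbing the identity on the $T$-side or on the $S$-side --- is the one that can actually be achieved.

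The step I expect to be the main obstacle is precisely this last bookkeeping together with the reduction preceding it: controlling the dimensions of the kernel, cokernel, and spectral-multiplicity spaces so that, after balancing defects, one genuinely lands in a one-sided rather than a merely two-sided extension. Every point at which this succeeds rests on the uniform geometry of Hilbert spaces (every Hilbert space embeds into every larger one, and partial isometries complete to unitaries after balancing defect spaces). This is exactly the feature that general Banach spaces lack, which is why the analogous statement cannot hold for them and, in fact, fails already for simple operators on $\ell^{p}$-spaces.
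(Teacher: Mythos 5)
There is an important mismatch of expectations here: the paper contains no proof of Theorem \ref{thm:Timotin-eae-equivalent-to-eaoe} at all. It is imported verbatim from Timotin \cite[Theorem 5.4]{Timotin}, and the paper's only comment on its proof is the introductory remark that Timotin's argument rests on the spectral theorem (after a reduction to positive operators) and on the uniform geometry of Hilbert spaces. Your sketch reproduces exactly that one-sentence description of the strategy, so there is no clash of approach to discuss; the question is whether your sketch constitutes a proof, and it does not. Beyond the trivial implication $(2)\Rightarrow(1)$, it has one step that is wrong as stated and one step that is explicitly left open.

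The reduction to positive operators fails in the form you give it. Equivalence after extension preserves kernels and cokernels: from $\smb{T&0\\0&\id_{H_2}}=E\smb{S&0\\0&\id_{H_1}}F$ with $E,F$ invertible one gets $F\left(\ker T\oplus\{0\}\right)=\ker S\oplus\{0\}$ and $E\left(\range S\oplus H_1\right)=\range T\oplus H_2$, hence $\ker T\cong\ker S$ and $H_1/\range T\cong H_2/\range S$. Now take $T$ to be the unilateral shift, so that $\abs T=\id$. Then $H_1/\range T$ is one-dimensional while $H_1/\range\abs T=\{0\}$, so $T$ is \emph{not} equivalent after extension to $\abs T$, and no enlargement of the extension spaces can repair this, because identity blocks leave kernel and cokernel unchanged. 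This also pinpoints where your completion mechanism breaks: when $\dim\ker T\neq\dim\ker T^{*}$, the polar partial isometry $U_T$ can never be completed to a unitary by forming direct sums ($U_T\oplus W$ is surjective only if $U_T$ is), and balancing the defect spaces by adjoining zero blocks changes the kernel and cokernel, hence the EAE class. So any passage from $T,S$ to $\abs T,\abs S$ must carry the defect/index data along explicitly; it is part of the real work in \cite{Timotin}, not a cost-free normalization.

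Second, the decisive step --- turning the ``matching condition on the spectral data'' into an actual one-sided extension, with the bookkeeping of kernels, cokernels and spectral multiplicities --- is precisely what you label ``the main obstacle'' and defer. That step is the entire content of Timotin's theorem (in the compact case it is where the singular-value condition of Theorem \ref{thm:TimotinForCompactOperators} comes from), and stating it as a goal is not an argument for it. As it stands, your text is a plausible program consistent with the paper's description of Timotin's method, but with its first reduction incorrect for operators with unequal defect indices and its core step missing, it is not a proof.
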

This result does not carry over to the case of Banach space operators,
as we will see from the results and examples presented below, cf.
Corollary \ref{cor:eae-does-not-imply-eaoe}.

All results in this section hinge on The Pitt-Rosenthal Theorem \cite[Theorem~5.14]{Morrison}: 
\begin{thm}[The Pitt-Rosenthal Theorem]
\label{thm:Pitt-Rosenthal-Theorem} Any operator in $B(\ell^{p},\ell^{q})$,
where $1\leq q<p<\infty$, is compact. 
\end{thm}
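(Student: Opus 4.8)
The plan is to prove compactness by exploiting the reflexivity of the domain together with a block-basis (``gliding-hump'') argument that detects an incompatibility between the growth rates $N^{1/p}$ and $N^{1/q}$. First I would observe that, since $q<p$ forces $p>1$, the space $\ell^{p}$ is reflexive; on a reflexive space an operator is compact precisely when it is completely continuous, i.e. maps weakly null sequences to norm null sequences. Thus it suffices to rule out the existence of a sequence $(x_{n})$ in $\ell^{p}$ with $\norm{x_{n}}_{p}\le 1$, $x_{n}\to 0$ weakly, and $\norm{Tx_{n}}_{q}\ge\epsilon$ for some $\epsilon>0$; I would assume such a sequence exists and derive a contradiction.

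Next I would reduce $(x_{n})$ to a disjointly supported sequence. Since weak nullity in $\ell^{p}$ implies coordinatewise nullity, a standard gliding-hump construction produces a subsequence (still denoted $x_{n}$) and a block basic sequence $(u_{n})$, whose members have pairwise disjoint finite supports with respect to the standard basis, such that $\norm{x_{n}-u_{n}}_{p}$ is as small as desired. In particular $\norm{u_{n}}_{p}\le 2$ and $\norm{Tu_{n}}_{q}\ge\epsilon/2$ for all large $n$. If $q=1$ the argument already terminates: $(Tu_{n})$ is weakly null in $\ell^{1}$, so the Schur property forces $\norm{Tu_{n}}_{1}\to 0$, contradicting the lower bound. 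For $1<q<p$ I would instead apply the same gliding-hump procedure to the weakly null, norm-bounded-below sequence $(Tu_{n})$ in $\ell^{q}$, obtaining a disjointly supported block sequence $(v_{n})$ with $\norm{Tu_{n}-v_{n}}_{q}$ summable and small, and $\norm{v_{n}}_{q}\ge\epsilon/4$.

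The contradiction then comes from decoupling the two norms on disjoint supports. Disjointness gives $\norm{\sum_{n=1}^{N}u_{n}}_{p}\le 2N^{1/p}$ and $\norm{\sum_{n=1}^{N}v_{n}}_{q}\ge(\epsilon/4)N^{1/q}$, so that, using boundedness of $T$ and the triangle inequality to pass from $v_{n}$ back to $Tu_{n}$, one obtains an estimate of the form $(\epsilon/4)N^{1/q}-C\le\norm{T}\,2N^{1/p}$ valid for all $N\in\N$. Dividing by $N^{1/p}$ and letting $N\to\infty$, the strict inequality $1/q>1/p$ forces the left-hand side to infinity while the right-hand side stays bounded, the desired contradiction. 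The main obstacle is the gliding-hump extraction itself: one must carry it out carefully enough to guarantee genuine disjointness of supports (so that the $\ell^{p}$- and $\ell^{q}$-norms become additive on the blocks) while keeping the perturbation errors controllable, and one must remember to treat $q=1$ separately via the Schur property, since the block argument in the range only works when $\ell^{q}$ is reflexive.
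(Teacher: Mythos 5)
Your proof is correct, but there is nothing in the paper to compare it against: the paper does not prove this statement at all, it simply quotes the Pitt--Rosenthal Theorem as a known result from the literature, citing \cite[Theorem 5.14]{Morrison}. So your argument stands as a self-contained proof, and it is a sound, standard one. All the key steps check out: since $q<p$ forces $p>1$, reflexivity of $\ell^{p}$ reduces compactness to complete continuity; weak nullity gives coordinatewise nullity, which is exactly what the gliding-hump extraction needs on both the domain and the range side; disjointness of supports makes the norms additive on blocks, yielding $\norm{\sum_{n=1}^{N}u_{n}}_{p}\leq 2N^{1/p}$ and $\norm{\sum_{n=1}^{N}v_{n}}_{q}\geq(\epsilon/4)N^{1/q}$; and with summable perturbation errors one gets $(\epsilon/4)N^{1/q}-C\leq 2\norm{T}N^{1/p}$ for all $N$, which is impossible since $1/q>1/p$. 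One small inaccuracy in your closing remark: the range-side block extraction does not fail for lack of reflexivity of $\ell^{q}$ --- it only needs coordinatewise nullity of $(Tu_{n})$, which holds for every $1\leq q<\infty$ because the coordinate functionals composed with $T$ are bounded functionals on $\ell^{p}$. What actually happens at $q=1$ is that the Schur property makes a weakly null, norm-bounded-below sequence impossible outright, so the contradiction simply arrives earlier; your separate treatment of that case is correct, but the unified $N^{1/q}$ versus $N^{1/p}$ estimate would in fact cover $q=1$ as well.
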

Obviously, every Hilbert space can be isometrically embedded into
any other Hilbert space of higher dimension, i.e., all Hilbert spaces
essentially have the same geometry. This is not true in the general
Banach space case: The Pitt-Rosenthal Theorem even implies that no
infinite dimensional subspace of $\ell^{p}$ is topologically isomorphic
to a subspace of $\ell^{q}$ (and vice versa) when $1\leq p\neq q<\infty$,
\cite[Corollary 5.10]{Morrison}. 
\begin{prop}
\label{prop:no-lp-operators-are-eaoe} No operators $T\in B(\ell^{p})$
and $S\in B(\ell^{q})$ are ever equivalent after one-sided extension
whenever $1\leq p\neq q<\infty$. \end{prop}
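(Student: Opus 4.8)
The plan is to exploit the defining feature of one-sided extension: that one of the auxiliary spaces $X'$ or $Y'$ may be taken to be trivial. Suppose, towards a contradiction, that $T\in B(\ell^{p})$ and $S\in B(\ell^{q})$ are equivalent after one-sided extension. By the symmetry of equivalence after extension (interchanging the roles of $T$ and $S$, and of $p$ and $q$), I may assume without loss of generality that $Y'=\{0\}$. Then there are invertible operators $E\in B(\ell^{q},\ell^{p}\oplus X')$ and $F\in B(\ell^{p}\oplus X',\ell^{q})$ with
\[
\begin{bmatrix} T & 0\\ 0 & \id_{X'}\end{bmatrix}=E\,S\,F.
\]

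The crucial observation is that the mere existence of the invertible operator $E$ forces $\ell^{q}$ and $\ell^{p}\oplus X'$ to be topologically isomorphic. Since $\ell^{p}$ is isomorphic to a complemented subspace of $\ell^{p}\oplus X'$, namely the range of the canonical projection onto the first summand, it follows that $\ell^{p}$ is topologically isomorphic to a subspace of $\ell^{q}$.

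This is where the geometry of $\ell^{p}$-spaces enters and yields the contradiction. As noted following Theorem \ref{thm:Pitt-Rosenthal-Theorem}, the Pitt-Rosenthal Theorem implies, via \cite[Corollary 5.10]{Morrison}, that when $1\leq p\neq q<\infty$ no infinite dimensional subspace of $\ell^{p}$ is topologically isomorphic to a subspace of $\ell^{q}$. Taking $\ell^{p}$ itself as an infinite dimensional subspace of $\ell^{p}$ contradicts the embedding just produced. Hence no such $E,F$ can exist, completing the argument.

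I do not anticipate a serious obstacle here; the argument is essentially a bookkeeping reduction followed by a direct appeal to the cited consequence of Pitt-Rosenthal. The only point requiring mild care is the symmetry reduction: one must verify that the case $X'=\{0\}$ is genuinely handled by the same argument with $p$ and $q$ interchanged, in which invertibility of $F$ yields $\ell^{p}\cong\ell^{q}\oplus Y'$ and hence an isomorphic embedding of $\ell^{q}$ into $\ell^{p}$, again excluded by the same corollary. It is also worth confirming that the relevant summand is complemented, so that the isomorphism of the direct sum with $\ell^{q}$ restricts to an actual isomorphic embedding of $\ell^{p}$; this is immediate from the projection onto the first coordinate.
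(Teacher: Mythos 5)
Your proof is correct and follows essentially the same route as the paper: both exploit that a one-sided extension forces one of the equivalence operators to be an isomorphism between $\ell^{q}$ and $\ell^{p}\oplus X'$ (or, under the opposite choice of which extension space is trivial, between $\ell^{p}$ and $\ell^{q}\oplus Y'$), and then derive a contradiction from the Pitt--Rosenthal Theorem. The only cosmetic difference is that the paper writes out the block inverse of this isomorphism to obtain $\id_{\ell^{q}}=GA$ with $A\in B(\ell^{q},\ell^{p})$ and $G\in B(\ell^{p},\ell^{q})$, so that Pitt--Rosenthal forces the identity to be compact (absurd), whereas you invoke the total-incomparability consequence \cite[Corollary 5.10]{Morrison} quoted just before the proposition---the same contradiction packaged as a statement about subspace embeddings rather than about compactness of a factored identity.
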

\begin{proof}
Suppose $T\in B(\ell^{p})$ and $S\in B(\ell^{q})$ are equivalent
after one-sided extension, where $1\leq p\neq q<\infty$. Then (by
perhaps exchanging the roles of $T$ and $S$ if necessary) there
exists a Banach space $X$ and operators $A\in B(\ell^{q},\ell^{p})$,
$B\in B(X,\ell^{p})$, $C\in B(\ell^{p},\ell^{q})$ and $D\in B(\ell^{p},X)$
such that 
\[
\left[\begin{array}{cc}
A & B\end{array}\right]:\ell^{q}\oplus X\to\ell^{p}\quad\mbox{and}\quad\left[\begin{array}{c}
C\\
D
\end{array}\right]:\ell^{p}\to\ell^{q}\oplus X
\]
are invertible and 
\begin{eqnarray*}
T & = & \left[\begin{array}{cc}
A & B\end{array}\right]\left[\begin{array}{cc}
S & 0\\
0 & \id_{X}
\end{array}\right]\left[\begin{array}{c}
C\\
D
\end{array}\right].
\end{eqnarray*}
Let $G\in B(\ell^{p},\ell^{q})$ and $H\in B(\ell^{p},X)$ be such
that $\left[\begin{smallmatrix}G\\
H
\end{smallmatrix}\right]$ is the inverse of $\left[\begin{smallmatrix}A & B\end{smallmatrix}\right]$,
i.e., 
\[
\left[\begin{array}{cc}
\id_{\ell^{q}} & 0\\
0 & \id_{X}
\end{array}\right]=\left[\begin{array}{c}
G\\
H
\end{array}\right]\left[\begin{array}{cc}
A & B\end{array}\right]=\left[\begin{array}{cc}
GA & GB\\
HA & HB
\end{array}\right].
\]
By The Pitt-Rosenthal Theorem, either $A\in B(\ell^{q},\ell^{p})$
or $G\in B(\ell^{p},\ell^{q})$ is compact, so that $\id_{\ell^{q}}=GA$
is also compact, which is absurd. We conclude that $T$ and $S$ cannot
be equivalent after one-sided extension. \end{proof}
\begin{cor}
\label{cor:eae-does-not-imply-eaoe} Let $T\in B(\ell^{p})$ and $S\in B(\ell^{q})$
be invertible, where $1\leq q\neq p<\infty$. Then $T$ and $S$ are
equivalent after extension, but are not equivalent after one-sided
extension. \end{cor}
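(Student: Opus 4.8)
The plan is to treat the two assertions of the corollary separately, since each is an immediate consequence of a result already established in the excerpt. The hypotheses, that $T\in B(\ell^{p})$ and $S\in B(\ell^{q})$ are invertible and that $1\leq q\neq p<\infty$, are exactly what is needed to invoke Remark~\ref{rem:elem-eae-properties} for the positive assertion (equivalence after extension) and Proposition~\ref{prop:no-lp-operators-are-eaoe} for the negative one (failure of equivalence after one-sided extension). So beyond checking that both apply, no genuinely new work is required.

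For the first assertion, I would spell out the elementary construction promised in Remark~\ref{rem:elem-eae-properties}. Write $X=\ell^{p}$ and $Y=\ell^{q}$. Since $T$ and $S$ are invertible, both $\smb{T&0\\0&\id_Y}\in B(X\oplus Y)$ and $\smb{S&0\\0&\id_X}\in B(Y\oplus X)$ are invertible. Let $J\in B(X\oplus Y,Y\oplus X)$ be the coordinate swap $(x,y)\mapsto(y,x)$, which is an isometric isomorphism because the $\ell^{2}$-direct sum norm is symmetric in its two summands. Setting $F:=J$ and
\[
E:=\left[\begin{array}{cc}T&0\\0&\id_Y\end{array}\right]J^{-1}\left[\begin{array}{cc}S&0\\0&\id_X\end{array}\right]^{-1},
\]
one checks that $E$ is invertible, being a product of invertible operators, that $E\in B(Y\oplus X,X\oplus Y)$ as required, and that $E\smb{S&0\\0&\id_X}F=\smb{T&0\\0&\id_Y}$. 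This is precisely the defining relation for equivalence after extension in the normalization $X'=Y$, $Y'=X$ used throughout the paper.

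For the second assertion I would simply appeal to Proposition~\ref{prop:no-lp-operators-are-eaoe}: since $1\leq p\neq q<\infty$, no operators on $\ell^{p}$ and $\ell^{q}$ whatsoever can be equivalent after one-sided extension, so in particular the invertible $T$ and $S$ cannot be. Invertibility plays no role in this half; it is used only to secure the positive statement.

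Honestly, there is no hard step here: the mathematical content is packaged entirely in the two cited results, and the value of the corollary is conceptual, in that it exhibits concrete (indeed invertible) operators that are equivalent after extension but \emph{not} after one-sided extension, thereby separating the two relations and showing that Theorem~\ref{thm:Timotin-eae-equivalent-to-eaoe} does not extend to Banach spaces. The only points requiring a moment's care are the bookkeeping of the direct-sum spaces $X\oplus Y$ versus $Y\oplus X$ in the definition of equivalence after extension, which the swap $J$ handles, and the verification that $E$ lands in $B(Y\oplus X,X\oplus Y)$ and is invertible.
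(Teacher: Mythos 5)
Your proof is correct and follows essentially the same route as the paper: the paper's own proof simply cites Remark~\ref{rem:elem-eae-properties} for the fact that invertible operators are equivalent after extension and Proposition~\ref{prop:no-lp-operators-are-eaoe} for the failure of one-sided extension. The only difference is that you spell out the elementary swap construction $F:=J$, $E:=\smb{T&0\\0&\id_Y}J^{-1}\smb{S&0\\0&\id_X}^{-1}$ behind the Remark, which the paper leaves implicit; your verification of it is accurate.
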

\begin{proof}
All invertible operators are equivalent after extension (cf. Remark
\ref{rem:elem-eae-properties}). The previous result shows that $T$
and $S$ cannot be equivalent after one-sided extension. 
\end{proof}
Although operators $T\in B(\ell^{p})$ and $S\in B(\ell^{q})$ can
still be equivalent after extension whenever $1\leq q\neq p<\infty$,
by another application of the Pitt-Rosenthal Theorem we will now show
that this cannot occur if one of the operators is compact. 
\begin{prop}
\label{prop:no-compacts-on-different-lps-are-eae} Let $T\in B(\ell^{p})$
and $S\in B(\ell^{q})$ with $1\leq q\neq p<\infty$. If either $T$
or $S$ is compact, then $T$ and $S$ cannot be equivalent after
extension. \end{prop}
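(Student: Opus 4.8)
The plan is to derive a contradiction by showing that the identity operator on one of the two sequence spaces would have to be compact. By the symmetry of equivalence after extension, I may assume without loss of generality that $S$ is the compact operator (if instead it is $T$ that is compact, I simply interchange the roles of $T$ and $S$, and of $p$ and $q$, throughout).

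Suppose, for contradiction, that $T$ and $S$ are equivalent after extension. Using Remark \ref{rem:elem-eae-properties} I take the extension spaces to be $X'=\ell^{q}$ and $Y'=\ell^{p}$, so that there are invertible operators $E\in B(\ell^{q}\oplus\ell^{p},\ell^{p}\oplus\ell^{q})$ and $F\in B(\ell^{p}\oplus\ell^{q},\ell^{q}\oplus\ell^{p})$ with
\[
\left[\begin{array}{cc} T & 0\\ 0 & \id_{\ell^{q}} \end{array}\right]=E\left[\begin{array}{cc} S & 0\\ 0 & \id_{\ell^{p}} \end{array}\right]F.
\]
I would then partition $E=\smb{E_{11}&E_{12}\\E_{21}&E_{22}}$ and $F=\smb{F_{11}&F_{12}\\F_{21}&F_{22}}$ into operator blocks, carefully recording their domains and codomains; in particular $E_{21}\in B(\ell^{q})$, $F_{12}\in B(\ell^{q})$, $E_{22}\in B(\ell^{p},\ell^{q})$ and $F_{22}\in B(\ell^{q},\ell^{p})$.

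The crux is to read off the lower-right $(2,2)$ entry of this matrix identity. On the left this entry is simply $\id_{\ell^{q}}$, while multiplying out the right-hand side (and using that the middle factor is block-diagonal) yields
\[
\id_{\ell^{q}}=E_{21}SF_{12}+E_{22}F_{22}.
\]
I would then argue that the right-hand side is compact. The term $E_{21}SF_{12}$ is compact because $S$ is compact and the compact operators form an ideal. The term $E_{22}F_{22}$ is compact because exactly one of its two factors maps from the larger to the smaller $\ell$-exponent and is therefore compact by the Pitt-Rosenthal Theorem (Theorem \ref{thm:Pitt-Rosenthal-Theorem}): namely $E_{22}\in B(\ell^{p},\ell^{q})$ is compact if $q<p$, and $F_{22}\in B(\ell^{q},\ell^{p})$ is compact if $p<q$. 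In either case $E_{22}F_{22}$ is compact, so $\id_{\ell^{q}}$ would be compact, contradicting the infinite-dimensionality of $\ell^{q}$.

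The only place where care is required — and the only place where an error could creep in — is the block bookkeeping: one must verify that the $(2,2)$ entry of the triple product is precisely $E_{21}SF_{12}+E_{22}F_{22}$ and that the domains and codomains are such that Pitt-Rosenthal genuinely applies to one of the factors of $E_{22}F_{22}$. It is worth emphasizing that this argument needs neither the compactness of $T$ itself nor the special form for $E$ and $F$ used in Proposition \ref{prop:RanLemma}; the generic $2\times2$ block structure of an equivalence after extension, together with a single application of Pitt-Rosenthal to the ``crossing'' off-diagonal blocks, already suffices.
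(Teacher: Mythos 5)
Your proof is correct and follows essentially the same route as the paper's own argument: assume without loss of generality that $S$ is compact, read off the $(2,2)$ block of the identity $\smb{T&0\\0&\id_{\ell^{q}}}=E\smb{S&0\\0&\id_{\ell^{p}}}F$ to get $\id_{\ell^{q}}=E_{21}SF_{12}+E_{22}F_{22}$, and apply the Pitt--Rosenthal Theorem to whichever of $E_{22}\in B(\ell^{p},\ell^{q})$ or $F_{22}\in B(\ell^{q},\ell^{p})$ goes from the larger to the smaller exponent, forcing $\id_{\ell^{q}}$ to be compact. Your block bookkeeping (domains and codomains of $E_{21}$, $F_{12}$, $E_{22}$, $F_{22}$) checks out, and your closing observation that neither compactness of $T$ nor the special form of $E$ and $F$ from Proposition \ref{prop:RanLemma} is needed is also consistent with the paper's proof.
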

\begin{proof}
By perhaps exchanging the roles of $S$ and $T$, we may assume that
$S$ is compact. The equivalence after extension of $T$ and $S$
implies that there exist invertible operators $F=\smb{F_{11}&F_{12}\\F_{21}&F_{22}}\in B(\ell^p\oplus\ell^q,\ell^q\oplus\ell^p)$
and $E=\smb{E_{11}&E_{12}\\E_{21}&E_{22}}\in B(\ell^q\oplus\ell^p,\ell^p\oplus\ell^q)$
such that 
\begin{eqnarray*}
\left[\begin{array}{cc}
T & 0\\
0 & \id_{\ell^{q}}
\end{array}\right] & =E\left[\begin{array}{cc}
S & 0\\
0 & \id_{\ell^{p}}
\end{array}\right]F=\left[\begin{array}{cc}
\ldots & \ldots\\
\ldots & E_{21}SF_{12}+E_{22}F_{22}
\end{array}\right].
\end{eqnarray*}
By The Pitt-Rosenthal Theorem, either $E_{22}\in B(\ell^{p},\ell^{q})$
or $F_{22}\in B(\ell^{q},\ell^{p})$ is compact. Therefore, since
$S$ is compact, $E_{21}SF_{12}+E_{22}F_{22}=\id_{\ell^{q}}$ is compact,
which is absurd. We conclude that $T$ and $S$ cannot be equivalent
after extension whenever $T$ or $S$ is compact. 
\end{proof}
A curious consequence of the previous corollary is that compact operators
with identical representations as matrices, but acting on different
$\ell^{p}$-spaces, cannot be equivalent after extension, as is shown
by Example \ref{exa:n-1_on_lp_and_lq_not_eae} below. This illustrates
the importance that the geometry of the underlying spaces play in
the possibility of certain classes of operators on them being equivalent
after extension. Furthermore, this example also shows that the implication
(2) $\Rightarrow$ (1) from Theorem \ref{thm:TimotinForCompactOperators}
does not carry over to Banach spaces in general with specific choices
of $s$-numbers standing in for singular values.

As before, for any bounded sequence $\{a_{n}\}_{n\in\N}\in\C$, by
$[a_{n}]\in B(\ell^{p})$ we will denote the diagonal operator $[a_{n}]:(x_{1},x_{2},\ldots)\mapsto(a_{1}x_{1},a_{2}x_{2},\ldots)$
with $(x_{1},x_{2},\ldots)\in\ell^{p}$.
\begin{example}
\label{exa:n-1_on_lp_and_lq_not_eae}Let $T$ and $S$ both denote
the compact diagonal operator $[n^{-1}]$ acting on $\ell^{p}$ and
$\ell^{q}$ respectively, with $1\leq p\neq q<\infty.$ Then $T$
and $S$ are not equivalent after extension by Proposition \ref{prop:no-compacts-on-different-lps-are-eae}.
By \cite[Theorem 11.11.3]{PietschOperatorIdeals} the approximation
numbers, Kolmogorov numbers and Gelfand numbers of both $T$ and $S$
are all three equal to the sequence $\{n^{-1}\}$. By taking any of
these three $s$-functions to play the role of the singular values
then shows that condition (2) in Theorem \ref{thm:TimotinForCompactOperators}
is satisfied, while $T$ and $S$ are not equivalent after extension. 
\end{example}

\section{Equivalence after extension for compact operators on general Banach
spaces\label{sec:Equivalence-after-extension-general}}

In the previous section we have seen that no compact operator on an
$\ell^{p}$-space can ever be equivalent after extension to an operator
on a different $\ell^{p}$-space. In this section we will prove similar
results for compact operators on more general Banach spaces. 
\begin{defn}
\label{def:incomparability}Let $X$ and $Y$ be Banach spaces. 
\begin{enumerate}
\item An operator $S\in B(X,Y)$ is called \emph{inessential} if, for all
operators $T\in B(Y,X)$, the operator $\id_{X}-TS$ is Fredholm.
The set of inessential operators in $B(X,Y)$ is denoted by $\mathcal{J}(X,Y)$. 
\item The Banach spaces $X$ and $Y$ are said to be \emph{essentially incomparable}
if $B(X,Y)=\mathcal{J}(X,Y)$. 
\item The Banach spaces $X$ and $Y$ are said to be \emph{totally incomparable}
if no infinite dimensional subspace of $X$ is topologically isomorphic
to a subspace of $Y$, and vice versa. 
\end{enumerate}
\end{defn}
Total incomparability was introduced by Rosenthal in \cite{Rosenthal}.
The notion of an inessential operator originated in \cite{PietschInessentialOperators}
and essentially incomparability was introduced in \cite{Gonzalez94}
(see \cite[Theorem 2]{Gonzalez94} for a characterization of pairs
of spaces that are essentially incomparable). We note that essential
incomparability of Banach spaces is symmetric, i.e., $B(X,Y)=\mathcal{J}(X,Y)$
if and only if $B(Y,X)=\mathcal{J}(Y,X)$, \cite[Proposition 1]{Gonzalez94}.
Furthermore, total incomparability implies essential incomparability,
but the converse is false (cf.~\cite{Gonzalez94}).

The Pitt-Rosenthal Theorem implies that different $\ell^{p}$-spaces
are totally incomparable \cite[Corollary 5.10]{Morrison}, and hence
essentially incomparable. The following result is therefore a generalization
of Proposition \ref{prop:no-compacts-on-different-lps-are-eae}, and,
together with \cite[Theorem 1]{Gonzalez94}, yields many more examples
of pairs of Banach spaces on which no operators on the one space can
be equivalent after extension to a compact operator on the other.
A more exotic example of a pair of essentially incomparable spaces
is any $C(K)$-space (which has the Dunford-Pettis property \cite[Theorem 5.4.5]{AlbiacKalton})
and the Tsirelson space (which is reflexive \cite[Theorem 10.3.2]{AlbiacKalton}),
which are then essentially incomparable by \cite[Theorem 1]{Gonzalez94}. 
\begin{thm}
\label{thm:essentially-incomparable-no-operator-eae-to-a-compact}Let
$X$ and $Y$ be infinite dimensional Banach spaces that are essentially
incomparable. Then no compact operator $S\in B(Y)$ is equivalent
after extension to any operator $T\in B(X)$. \end{thm}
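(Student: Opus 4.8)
The strategy is to derive a contradiction from the assumed equivalence after extension, exactly in the spirit of Proposition~\ref{prop:no-compacts-on-different-lps-are-eae}, but replacing the Pitt--Rosenthal Theorem by the definition of essential incomparability. The plan is to begin by assuming that a compact operator $S\in B(Y)$ is equivalent after extension to some $T\in B(X)$. By Remark~\ref{rem:elem-eae-properties} there exist invertible operators $E\in B(Y\oplus X,X\oplus Y)$ and $F\in B(X\oplus Y,Y\oplus X)$ with
\[
\left[\begin{array}{cc}T & 0\\0 & \id_{Y}\end{array}\right]=E\left[\begin{array}{cc}S & 0\\0 & \id_{X}\end{array}\right]F.
\]
Writing $E=\smb{E_{11}&E_{12}\\E_{21}&E_{22}}$ and $F=\smb{F_{11}&F_{12}\\F_{21}&F_{22}}$, where $E_{22}\in B(X,Y)$ and $F_{22}\in B(Y,X)$, the $(2,2)$-entry of the product yields
\[
\id_{Y}=E_{21}SF_{12}+E_{22}F_{22}.
\]

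The key step is to extract a Fredholm contradiction from this identity. First I would observe that $E_{21}SF_{12}$ is compact, since $S$ is compact and compactness is preserved under composition on either side. Hence $\id_{Y}-E_{22}F_{22}=E_{21}SF_{12}$ is compact, which forces $\id_{Y}-E_{22}F_{22}$ to be a \emph{finite rank}-like perturbation only in the Fredholm sense; more precisely, a compact perturbation of the identity is Fredholm of index zero. The crucial point is that $E_{22}\in B(X,Y)$ while $F_{22}\in B(Y,X)$ are operators between the two essentially incomparable spaces, so $E_{22}$ is inessential, meaning $\id_{Y}-E_{22}F_{22}$ is Fredholm by the very definition of inessentiality (Definition~\ref{def:incomparability}(1), with the roles of $X$ and $Y$ matched appropriately, using that essential incomparability is symmetric). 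At this stage $\id_{Y}-E_{22}F_{22}$ is Fredholm, and from $\id_{Y}=E_{22}F_{22}+(\text{compact})$ we learn that $E_{22}F_{22}$ is itself Fredholm.

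The main obstacle — and the heart of the argument — is to convert ``$E_{22}F_{22}$ is Fredholm'' into a contradiction with $E_{22}$ being inessential on an infinite dimensional space. I would argue that if $E_{22}F_{22}$ is Fredholm, then $E_{22}$ must have closed range of finite codimension in $Y$, and correspondingly $\operatorname{ran}E_{22}$ would contain an infinite dimensional closed subspace on which $E_{22}$ acts as an isomorphism onto a subspace of $Y$; but an inessential operator cannot be an isomorphism on any infinite dimensional subspace (this is the standard characterization: $E_{22}\in\mathcal{J}(X,Y)$ if and only if $E_{22}$ is not bounded below on any infinite dimensional subspace). Since $X$ and $Y$ are infinite dimensional, this is the desired absurdity. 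I expect the delicate bookkeeping to lie precisely in justifying that Fredholmness of $E_{22}F_{22}$ propagates to $E_{22}$ being bounded below somewhere; the cleanest route is to use that for inessential $E_{22}$ the operator $\id_{Y}-E_{22}F_{22}$ is Fredholm directly, whence $E_{22}F_{22}$ being simultaneously Fredholm (from the displayed identity) and a product involving the inessential factor $E_{22}$ contradicts the index/essential-spectrum properties of inessential operators. Concluding, no such $T$ can exist.
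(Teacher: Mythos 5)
Your setup coincides exactly with the paper's: the same choice $X'=Y$, $Y'=X$, the same $(2,2)$-entry identity $\id_{Y}=E_{21}SF_{12}+E_{22}F_{22}$, and the same two observations that $\id_{Y}-E_{22}F_{22}=E_{21}SF_{12}$ is compact and that $\id_{Y}-E_{22}F_{22}$ is Fredholm by essential incomparability. Up to that point the argument is correct. The gap is in the closing step. The ``standard characterization'' you invoke --- that $E_{22}\in\mathcal{J}(X,Y)$ if and only if $E_{22}$ is not bounded below on any infinite dimensional subspace --- is false: that property defines the \emph{strictly singular} operators, and strictly singular operators form a strictly smaller class than the inessential ones. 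For instance, the inclusion $c_{0}\hookrightarrow\ell^{\infty}$ is an isometry, hence bounded below on an infinite dimensional subspace, yet it is inessential: an inessential operator is one that restricts to an isomorphism onto a \emph{complemented} subspace of the codomain on no infinite dimensional closed subspace, and infinite dimensional complemented subspaces of $\ell^{\infty}$ are isomorphic to $\ell^{\infty}$, hence nonseparable, while subspaces of $c_{0}$ are separable. Your sketch never produces the required complementation, and the fallback you offer (``contradicts the index/essential-spectrum properties of inessential operators'') asserts a contradiction rather than proving one. So as written the proof does not close.

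The idea you are missing is that no further facts about inessential operators are needed: the two facts you already established are directly contradictory on an infinite dimensional space, and this is precisely the paper's punchline. Write $K:=\id_{Y}-E_{22}F_{22}$. Being Fredholm, $K$ has finite dimensional kernel and closed range of finite codimension; being compact with closed range, $K$ has finite rank, since the induced bijection from $Y/\ker K$ onto $\range K$ is an isomorphism (open mapping theorem) which is also compact, forcing the unit ball of $\range K$ to be relatively compact. Hence $\dim Y=\dim\ker K+\dim\range K<\infty$, contradicting the hypothesis that $Y$ is infinite dimensional. Alternatively, your route can be repaired, but only by supplying the complementation your cited characterization omits: since $\range(E_{22}F_{22})$ is closed and of finite codimension, it is complemented in $Y$; taking a closed complement $Y_{1}$ of $\ker(E_{22}F_{22})$, the operator $E_{22}$ restricts to an isomorphism of the infinite dimensional closed subspace $F_{22}(Y_{1})\subseteq X$ onto $\range(E_{22}F_{22})$, and composing the inverse of this restriction with a projection of $Y$ onto $\range(E_{22}F_{22})$ yields $T\in B(Y,X)$ for which $\id_{X}-TE_{22}$ has infinite dimensional kernel, contradicting $E_{22}\in\mathcal{J}(X,Y)$. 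Either repair completes the proof; the paper's is the shorter of the two.
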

\begin{proof}
Suppose $S\in B(Y)$ is compact and equivalent after extension to
$T\in B(X)$. Then there exist invertible operators $E=\smb{E_{11}&E_{12}\\E_{21}&E_{22}}\in B(Y\oplus X,X \oplus Y)$
and $F=\smb{F_{11}&F_{12}\\F_{21}&F_{22}}\in B(X\oplus Y,Y \oplus X)$,
so that 
\begin{eqnarray*}
\left[\begin{array}{cc}
T & 0\\
0 & \id_{Y}
\end{array}\right] & =E\left[\begin{array}{cc}
S & 0\\
0 & \id_{X}
\end{array}\right]F=\left[\begin{array}{cc}
\ldots & \ldots\\
\ldots & E_{21}SF_{12}+E_{22}F_{22}
\end{array}\right].
\end{eqnarray*}
Therefore $\id_{Y}=E_{21}SF_{12}+E_{22}F_{22}.$ Since $X$ and $Y$
are essentially incomparable, it follows that $\id_{Y}-E_{22}F_{22}$
is Fredholm. On the other hand, $\id_{Y}-E_{22}F_{22}=E_{21}SF_{12}$
is compact. That $\id_{Y}-E_{22}F_{22}$ is both Fredholm and compact,
implies that $Y$ is finite dimensional, contrary to our assumption.
We conclude that $T$ and $S$ cannot be equivalent after extension. 
\end{proof}
In our next result, we show that a consequence of an operator on some
Banach space $X$ being equivalent after extension to a compact operator
on another Banach space $Y$, is that a complemented subspace of finite
codimension in $Y$ must necessarily embed into $X$. In other words,
the geometry of $X$ \emph{must} be ``compatible enough'' with that
of $Y$ to allow for such an embedding.
\begin{thm}
\label{prop:Ran_finite_codim_subspace_embeds} Let $X$ and $Y$ be
Banach spaces and $T\in B(X)$ and $S\in B(Y)$ operators which are
equivalent after extension. If $S$ is compact, then there exists
a closed subspace of $Y$ of finite codimension that is topologically
isomorphic to a closed subspace of $X$. \end{thm}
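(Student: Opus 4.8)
The plan is to extract from the equivalence-after-extension relation a single identity in the bottom-right corner of the product and to read off from it a Fredholm operator from $Y$ to $X$ that is bounded below on a finite-codimensional subspace. Adopting the convention of Remark \ref{rem:elem-eae-properties} (so that $X'=Y$ and $Y'=X$), write the defining relation as
\[
\smb{T&0\\0&\id_Y}=E\smb{S&0\\0&\id_X}F,
\]
with invertible $E=\smb{E_{11}&E_{12}\\E_{21}&E_{22}}\in B(Y\oplus X,X\oplus Y)$ and $F=\smb{F_{11}&F_{12}\\F_{21}&F_{22}}\in B(X\oplus Y,Y\oplus X)$, where in particular $E_{22}\in B(X,Y)$ and $F_{22}\in B(Y,X)$. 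Comparing the bottom-right entries of both sides yields
\[
\id_Y=E_{21}SF_{12}+E_{22}F_{22}.
\]

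Next I would set $A:=E_{22}F_{22}\in B(Y)$ and $K:=E_{21}SF_{12}\in B(Y)$, so that $A=\id_Y-K$. Since $S$ is compact, $K$ is compact, and hence $A$ is Fredholm. If $Y$ is finite dimensional the conclusion is immediate (the subspace $\{0\}$ has finite codimension in $Y$ and embeds into $X$ trivially), so I may assume $Y$ is infinite dimensional; then $A\neq0$, and in particular $E_{22}\neq0$. Exploiting that $A$ is Fredholm, the kernel $\ker A$ is finite dimensional and therefore admits a closed complement $Y_0\subseteq Y$, which is of finite codimension equal to $\dim\ker A$. Because $\range A$ is closed and $A|_{Y_0}\colon Y_0\to\range A$ is a continuous bijection between Banach spaces, the open mapping theorem makes it a topological isomorphism; hence there is a constant $c>0$ with $\norm{Ay}\ge c\norm y$ for all $y\in Y_0$.

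Finally, from $A=E_{22}F_{22}$ I would estimate
\[
c\norm y\le\norm{Ay}=\norm{E_{22}F_{22}y}\le\norm{E_{22}}\,\norm{F_{22}y}\qquad(y\in Y_0),
\]
so that $F_{22}|_{Y_0}$ is bounded below. Consequently $F_{22}|_{Y_0}$ is a topological isomorphism of the finite-codimensional closed subspace $Y_0$ of $Y$ onto the closed subspace $F_{22}(Y_0)$ of $X$, which is exactly the asserted embedding.

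The computational steps are routine; the point requiring care is the transfer of the ``bounded below'' property from the composition $A$ to the single factor $F_{22}$, which is what forces $F_{22}(Y_0)$ to be closed and the restriction to be an isomorphism. I expect the only genuine obstacle to be the observation that the corner identity forces $A=\id_Y-K$ to be Fredholm and hence bounded below away from a finite-dimensional kernel; once this is in place, the remaining estimates and the identification of $Y_0$ as finite-codimensional are bookkeeping.
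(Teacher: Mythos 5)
Your proof is correct, and it follows the paper's skeleton up to the decisive estimate: both arguments extract the corner identity $\id_Y=E_{21}SF_{12}+E_{22}F_{22}$, observe that $E_{22}F_{22}$ is a compact perturbation of the identity and hence Fredholm, and then exhibit $F_{22}$ as a topological isomorphism of a finite-codimensional closed subspace of $Y$ onto a closed subspace of $X$. Where you diverge is in how the lower bound for $F_{22}$ is obtained. The paper takes a complement $Y_1$ of $\ker F_{22}$ (finite dimensional because $\ker F_{22}\subseteq\ker(E_{22}F_{22})$) and proves $\inf\{\|F_{22}y\|:y\in Y_1,\ \|y\|=1\}>0$ by a subsequence argument that re-uses the compactness of $S$: a normalized sequence with $F_{22}y_n\to0$ forces $E_{22}F_{22}y_n\to0$, compactness gives a convergent subsequence of $E_{21}SF_{12}y_{n_k}$, and the limit is a norm-one vector of $Y_1$ annihilated by $F_{22}$, a contradiction. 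You instead complement $\ker(E_{22}F_{22})$, invoke the standard fact that a Fredholm operator is bounded below on any closed complement of its kernel (closed range plus the open mapping theorem), and transfer the lower bound through the factorization $c\|y\|\le\|E_{22}F_{22}y\|\le\|E_{22}\|\,\|F_{22}y\|$. This is arguably cleaner: compactness of $S$ enters only once, to get Fredholmness, and no sequential compactness argument is needed. The price is twofold, and you pay it correctly: you must rule out $E_{22}=0$ before dividing by $\|E_{22}\|$ (your reduction to infinite-dimensional $Y$ handles this, since $\id_Y-K=0$ would make $\id_Y$ compact), and your subspace complements the possibly larger kernel $\ker(E_{22}F_{22})\supseteq\ker F_{22}$, so its codimension may exceed that of the paper's $Y_1$ --- immaterial for the statement, which only asks for some finite codimension.
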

\begin{proof}
Let $T\in B(X)$ and $S\in B(Y)$, with $S$ compact, be equivalent
after extension. Then there exist invertible operators $E=\smb{E_{11}&E_{12}\\E_{21}&E_{22}}\in B(Y\oplus X,X \oplus Y)$
and $F=\smb{F_{11}&F_{12}\\F_{21}&F_{22}}\in B(X\oplus Y,Y \oplus X)$
such that 
\begin{eqnarray*}
\left[\begin{array}{cc}
T & 0\\
0 & \id_{Y}
\end{array}\right] & =E\left[\begin{array}{cc}
S & 0\\
0 & \id_{X}
\end{array}\right]F=\left[\begin{array}{cc}
\ldots & \ldots\\
\ldots & E_{21}SF_{12}+E_{22}F_{22}
\end{array}\right].
\end{eqnarray*}
Since $E_{22}F_{22}=\id_{Y}-E_{21}SF_{12}$ and $S$ is compact, $E_{22}F_{22}$
is Fredholm, and therefore has finite dimensional kernel. In particular,
$F_{22}\in B(Y,X)$ has finite dimensional kernel. Since all finite
dimensional spaces are complemented, there exists a complement, denoted
$Y_{1}$, of $\ker F_{22}$ in $Y$, i.e., $Y$ is topologically isomorphic
to $\ker F_{22}\oplus Y_{1}$.

We claim that $\inf\set{\norm{F_{22}y}}{y\in Y_{1},\ \norm y=1}>0$.
Suppose to the contrary that there exists a sequence $\{y_{n}\}\subseteq Y_{1}$,
with $\norm{y_{n}}=1$ for all $n\in\N$, such that $F_{22}y_{n}\to0$.
Then $y_{n}-E_{21}SF_{12}y_{n}=E_{22}F_{22}y_{n}\to0$ as $n\to\infty$.
Since $S$ is compact and $\norm{y_{n}}=1$ for all $n\in\N$, there
exists a subsequence $\{y_{n_{k}}\}$ of $\{y_{n}\}$ such that $\{E_{21}SF_{12}y_{n_{k}}\}$
converges, with limit denoted $y$. But then $y_{n_{k}}=E_{22}F_{22}y_{n_{k}}+E_{21}SF_{12}y_{n_{k}}\to0+y$
as $k\to\infty.$ Since $\{y_{n}\}\subseteq Y_{1}$, with $\norm{y_{n}}=1$
for all $n\in\N$, we obtain $y\in Y_{1}$ and $\norm y=1$. Since
$y\in Y_{1}$ and $Y_{1}$ is a complement of $\ker F_{22}$, we obtain
$0\neq F_{22}y=\lim_{k\to\infty}F_{22}y_{n_{k}}=0$, which is absurd.
We conclude that $\inf\set{\norm{F_{22}y}}{y\in Y_{1},\ \norm y=1}>0.$

Now defining $X_{1}:=\range\left(F_{22}|_{Y_{1}}\right)$, the operator
$F_{22}|_{Y_{1}}:Y_{1}\to X_{1}$ is bijective with bounded inverse.
Therefore $X_{1}$ is complete, and hence closed. The operator $F_{22}|_{Y_{1}}$
is then the sought topological isomorphism. 
\end{proof}
We recall that a Banach space $X$ is called \textit{prime} \cite[Definition 2.2.5]{AlbiacKalton},
if every infinite dimensional complemented subspace of $X$ is topologically
isomorphic to $X$. Standard examples of prime spaces are the space
of convergent sequences $c$, the space of sequences converging to
zero $c_{0}$ (both endowed with the uniform norm), and $\ell^{p}$
with $1\leq p\leq\infty$ (cf.~\cite{AlbiacKalton}). The following
corollary follows immediately from the previous result: 
\begin{cor}
Let $X$ and $Y$ be Banach spaces with $Y$ prime. If $S\in B(Y)$
is compact and equivalent after extension to some $T\in B(X)$, then
$X$ contains a copy of $Y$. 
\end{cor}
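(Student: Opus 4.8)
The plan is to combine Theorem \ref{prop:Ran_finite_codim_subspace_embeds} with the definition of a prime space, reducing the argument to a short chain of topological isomorphisms. First I would apply Theorem \ref{prop:Ran_finite_codim_subspace_embeds}: since $S\in B(Y)$ is compact and equivalent after extension to $T\in B(X)$, there is a closed subspace $Y_{1}\subseteq Y$ of finite codimension and a closed subspace $X_{1}\subseteq X$ such that $Y_{1}$ is topologically isomorphic to $X_{1}$. Here I take $Y$ to be infinite dimensional, which is the case in which primeness carries any content.

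The next step is to verify that $Y_{1}$ is in fact a \emph{complemented} subspace of $Y$, as this is what lets the primeness hypothesis be brought to bear. This is where the finite-codimension conclusion of Theorem \ref{prop:Ran_finite_codim_subspace_embeds} is essential: choosing vectors $z_{1},\dots,z_{n}\in Y$ whose cosets form a basis of the finite-dimensional quotient $Y/Y_{1}$ and setting $M:=\span\curly{z_{1},\dots,z_{n}}$, one has $Y=Y_{1}\oplus M$ algebraically. Since $M$ is finite dimensional it is closed, and the sum is then automatically a topological direct sum, by the same fact (finite dimensional subspaces are complemented) already invoked in the proof of Theorem \ref{prop:Ran_finite_codim_subspace_embeds}. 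Thus $Y_{1}$ is a complemented subspace of $Y$, and it is infinite dimensional because $\dim(Y/Y_{1})<\infty$ while $Y$ is infinite dimensional.

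Finally, because $Y$ is prime, every infinite dimensional complemented subspace of $Y$ is topologically isomorphic to $Y$; in particular $Y_{1}$ is topologically isomorphic to $Y$. Composing this with the isomorphism $Y_{1}\cong X_{1}$ from the first step yields a topological isomorphism of $Y$ onto the closed subspace $X_{1}$ of $X$, i.e.\ $X$ contains a copy of $Y$. I do not anticipate a serious obstacle: the lone point requiring care is the justification that a closed finite-codimensional subspace is complemented, so that primeness applies, and everything else is a direct substitution into the already-established results.
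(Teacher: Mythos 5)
Your proof is correct and is precisely the argument the paper intends (the paper gives no written proof, stating only that the corollary ``follows immediately'' from Theorem \ref{prop:Ran_finite_codim_subspace_embeds}): the closed finite-codimensional subspace $Y_{1}$ is complemented and infinite dimensional, so primeness yields $Y_{1}\cong Y$, and composing with the embedding of $Y_{1}$ onto a closed subspace of $X$ finishes. Your two added details---that a closed finite-codimensional subspace is complemented, and the restriction to infinite dimensional $Y$, which is built into the standard definition of prime in \cite{AlbiacKalton}---are exactly the points the paper leaves implicit (the first is in fact automatic from the paper's proof of Theorem \ref{prop:Ran_finite_codim_subspace_embeds}, where $Y_{1}$ is constructed as a complement of the finite dimensional $\ker F_{22}$).
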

Theorem \ref{prop:Ran_finite_codim_subspace_embeds} shows that any
Banach space property that $X$ may have, that is also transferred
to its closed subspaces and also preserved under the taking of direct
sums with finite dimensional spaces, must transfer to $Y$. We make
this precise by stating the following definition and subsequent result. 
\begin{defn}
Let $X,Y$ and $Z$ be a Banach spaces and let $(P)$ be a Banach
space property. 
\begin{enumerate}
\item We will say $X$ \emph{transfers property $(P)$ to closed subspaces},
if every closed subspace of $X$ has property $(P)$. 
\item We will say $(P)$ is \emph{preserved under direct sums} \emph{with
finite dimensional spaces}, if $Y\oplus Z$ has property $(P)$, whenever
$Y$ has property $(P)$ and $Z$ is finite dimensional. 
\end{enumerate}
\end{defn}
\begin{prop}
\label{prop:eae-to-compact-implies-transferrence-of-properties}Let
$(P)$ be a Banach space property, and let $X$ and $Y$ be Banach
spaces with $T\in B(X)$ and $S\in B(Y)$ equivalent after extension.
If $S$ is compact, $X$ transfers property $(P)$ to closed subspaces,
and $(P)$ is preserved under direct sums with finite dimensional
spaces, then $Y$ has property $(P)$.
\end{prop}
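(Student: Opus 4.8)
The plan is to reduce the entire statement to a single application of Theorem~\ref{prop:Ran_finite_codim_subspace_embeds}, followed by a chaining of the two hypotheses on the property $(P)$. Since $S$ is compact and $T$ and $S$ are equivalent after extension, Theorem~\ref{prop:Ran_finite_codim_subspace_embeds} supplies a closed subspace $Y_{0}\subseteq Y$ of finite codimension that is topologically isomorphic to a closed subspace $X_{0}\subseteq X$. This is the only point at which compactness of $S$ and equivalence after extension are used; everything afterwards is a matter of transporting $(P)$ along this isomorphism and then across the finite-dimensional gap separating $Y_{0}$ from $Y$.

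First I would transfer $(P)$ from $X$ to $Y_{0}$. Because $X_{0}$ is a closed subspace of $X$ and, by hypothesis, $X$ transfers $(P)$ to closed subspaces, the space $X_{0}$ has property $(P)$. Invoking the standard convention that a Banach space property is invariant under topological isomorphism, the isomorphism $Y_{0}\cong X_{0}$ then yields that $Y_{0}$ has property $(P)$ as well.

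Next I would bridge the finite codimension. Since $Y_{0}$ is closed of finite codimension in $Y$, the quotient $Y/Y_{0}$ is finite dimensional; lifting a basis of $Y/Y_{0}$ to $Y$ produces a finite-dimensional subspace $Z\subseteq Y$ for which $Y$ is topologically isomorphic to $Y_{0}\oplus Z$. As $Y_{0}$ has property $(P)$ and $Z$ is finite dimensional, the assumption that $(P)$ is preserved under direct sums with finite-dimensional spaces gives that $Y_{0}\oplus Z$ has property $(P)$, and hence so does $Y$.

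There is no genuinely hard step here: the substantive content has already been isolated in Theorem~\ref{prop:Ran_finite_codim_subspace_embeds}. The only points requiring a word of care are the two structural facts used silently above, namely that a Banach space property is an isomorphism invariant, and that a closed subspace of finite codimension always admits a finite-dimensional (and therefore closed, complementary) complement, so that the decomposition $Y\cong Y_{0}\oplus Z$ is legitimate. Both are routine, and the proof is essentially a bookkeeping exercise threading the two hypotheses on $(P)$ onto the embedding furnished by the earlier theorem.
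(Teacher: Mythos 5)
Your proof is correct and follows essentially the same route as the paper: invoke Theorem~\ref{prop:Ran_finite_codim_subspace_embeds} to obtain a finite-codimensional closed subspace of $Y$ isomorphic to a closed subspace of $X$, transfer $(P)$ along that isomorphism, and then pass across the finite-dimensional complement. The two points you flag as needing care (isomorphism invariance of $(P)$ and the existence of a finite-dimensional complement of $Y_{0}$) are likewise used silently in the paper's proof, so nothing is missing.
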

\begin{proof}
By Theorem \ref{prop:Ran_finite_codim_subspace_embeds}, there exists
a complemented subspace $Y_{1}$ of $Y$ with finite codimension that
is topologically isomorphic to a closed subspace of $X$. Since $X$
transfers property $(P)$ to closed subspaces, $Y_{1}$ has property
$(P)$. Also, $(P)$ is preserved under direct sums with finite dimensional
spaces, so $Y$ has property $(P)$, because $Y$ is topologically
isomorphic to a direct sum of $Y_{1}$ and a finite dimensional complement
of $Y_{1}$. 
\end{proof}
We briefly demonstrate some applications of the previous proposition
in the next corollary.

We refer the reader to \cite{DiestelUhl} for definitions of the Radon-Nikodym
property and the Dunford-Pettis property. A Banach space is said to
have the \emph{hereditary Dunford-Pettis property }if each of its
closed subspaces has the Dunford-Pettis property. 
\begin{cor}
\label{cor:properties-that-transfer}Let $X$ and $Y$ be Banach spaces
with $T\in B(X)$ and $S\in B(Y)$ equivalent after extension. If
$S$ is compact, then: 
\begin{enumerate}
\item If $X$ is isomorphic a Hilbert space, then so is $Y$. 
\item If $X$ is separable, then so is $Y$. 
\item If $X$ is reflexive, then so is $Y$. 
\item If $X$ has the Radon-Nikodym property, then so does $Y$. 
\item If $X$ has the hereditary Dunford-Pettis property, then so does $Y$. 
\end{enumerate}
\end{cor}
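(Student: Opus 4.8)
The plan is to obtain all five assertions as immediate applications of Proposition \ref{prop:eae-to-compact-implies-transferrence-of-properties}. For each listed property $(P)$, I would verify its two hypotheses --- that $X$ transfers $(P)$ to its closed subspaces, and that $(P)$ is preserved under direct sums with finite dimensional spaces --- and then invoke the proposition, whose remaining hypotheses (that $S$ is compact and that $T$ and $S$ are equivalent after extension) are exactly those assumed here.

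For properties (1)--(4) I expect both verifications to be standard. If $X$ is isomorphic to a Hilbert space, then transporting a closed subspace through the isomorphism lands it as a closed subspace of a Hilbert space, hence isomorphic to a Hilbert space; and a direct sum of a space isomorphic to a Hilbert space with a finite dimensional space is again isomorphic to a Hilbert space. Separability, reflexivity, and the Radon--Nikodym property are each well known to pass to closed subspaces (for the Radon--Nikodym property, see \cite{DiestelUhl}), and each is trivially possessed by finite dimensional spaces and stable under finite direct sums. So in these four cases I would simply cite the relevant classical facts.

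The one case needing care is (5), the hereditary Dunford--Pettis property, since the plain Dunford--Pettis property is \emph{not} inherited by closed subspaces in general. That the hereditary version transfers to closed subspaces is immediate from its definition, as a closed subspace of a closed subspace of $X$ is again a closed subspace of $X$, and hence has the Dunford--Pettis property. For preservation under finite dimensional direct sums I would argue as follows: given $Y$ with the hereditary Dunford--Pettis property and $Z$ finite dimensional, take an arbitrary closed subspace $W\subseteq Y\oplus Z$; projecting onto the $Z$-coordinate shows that $W\cap Y$ has finite codimension in $W$, hence (being closed and finite codimensional) is complemented, so that $W$ is topologically isomorphic to $(W\cap Y)\oplus F$ with $F$ finite dimensional. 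As $W\cap Y$ is a closed subspace of $Y$ it has the Dunford--Pettis property, and since that property is stable under finite direct sums (finite dimensional spaces enjoying it trivially), $W$ does too. As $W$ was arbitrary, $Y\oplus Z$ has the hereditary Dunford--Pettis property. This is the only step that is not a direct citation, and it is where I expect the main, albeit modest, obstacle to lie: one must keep the hereditary and non-hereditary versions distinct, and exploit that passing from $Y$ to $Y\oplus Z$ only introduces a finite dimensional summand in any closed subspace.
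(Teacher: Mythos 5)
Your proposal is correct and follows exactly the paper's route: every item is reduced to Proposition \ref{prop:eae-to-compact-implies-transferrence-of-properties} by verifying that the property in question passes to closed subspaces and is preserved under direct sums with finite dimensional spaces, citing standard references for reflexivity and the Radon--Nikodym property. Your detailed verification for the hereditary Dunford--Pettis property (splitting an arbitrary closed subspace $W \subseteq Y \oplus Z$ as $(W \cap Y) \oplus F$ with $W \cap Y$ closed of finite codimension in $W$ and $F$ finite dimensional) is sound and merely fills in what the paper dismisses as a ``straightforward argument.''
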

\begin{proof}
The results all follow from Proposition \ref{prop:eae-to-compact-implies-transferrence-of-properties}:

If $X$ is respectively isomorphic to a Hilbert space, separable or
has the hereditary Dunford-Pettis property, then straightforward arguments
will show that each of these properties is respectively transferred
to closed subspaces and preserved under taking direct sums with finite
dimensional spaces. This establishes (1), (2) and (5).

Every closed subspace of a reflexive space is reflexive \cite[Theorem 1.11.16]{Megginson},
and an elementary argument will establish that direct sums of reflexive
spaces with finite dimensional spaces are reflexive, establishing
(3).

Every Banach space with the Radon-Nikodym property transfers the Radon-Nikodym
property to its closed subspaces \cite[Theorem III.3.2]{DiestelUhl},
and an elementary argument will establish that the direct sum of a
Banach space with the Radon-Nikodym property and a finite dimensional
space has the Radon-Nikodym property, establishing (4). 
\end{proof}

\bibliographystyle{amsplain}
\bibliography{bib}

\end{document}